\documentclass[12pt]{amsart}
\usepackage{amsthm}
\usepackage{amssymb}
\usepackage{amsmath}
\textwidth=5in \textheight=7.5in

\theoremstyle{plain}
\newtheorem{proposition}{Proposition}
\newtheorem{theorem}[proposition]{Theorem}
\newtheorem{lemma}[proposition]{Lemma}
\newtheorem{corollary}[proposition]{Corollary}

\theoremstyle{definition}
\newtheorem{definition}[proposition]{Definition}

\theoremstyle{definition}

\newtheorem{remark}[proposition]{Remark}

\numberwithin{equation}{section}

\numberwithin{proposition}{section}

\gdef\myletter{}

\let\savetheequation\theequation
\def\theequation{\savetheequation\myletter}

\newcommand{\CC}{{\mathbb C}}

\newcommand{\RR}{{\mathbb R}}

\newcommand{\PP}{{\mathbb P}}


\renewcommand{\date}{\today}
\def \bar{\overline}


\begin{document}

\vskip 3mm

\title[Pluripotential Energy]{\bf Pluripotential Energy}

\author{T. Bloom* and N. Levenberg}{\thanks{*Supported in part by NSERC grant}}

\address{University of Toronto, Toronto, Ontario M5S 2E4 Canada}  
\email{bloom@math.toronto.edu}

\address{Indiana University, Bloomington, IN 47405 USA}

\email{nlevenbe@indiana.edu}

\date

\maketitle
\section{Introduction.}\label{sec:intro}

In \cite{[BBGZ]}, the authors describe a variational approach to the complex Monge-Amp\`ere equation. In addition, they define a notion of an {\it electrostatic energy} $E^*(\mu)$ associated with a probability measure $\mu$ on a compact K\"ahler manifold $X$ of dimension $n$ (\cite{[BBGZ]}, Definition 4.3). Moreover, given a compact subset $H\subset X$ and a continuous function $v$ on $H$, they show that the {\it equilibrium measure} $\mu_{eq}(H,v)$, which is the Monge-Amp\`ere measure associated to the equilibrium weight of $(K,v)$, is the unique minimizer of the functional 
\begin{equation}\label{bbgzfunc} {\mathcal I}_v(\mu):=  E^*(\mu) +\int_H v d\mu
\end{equation}
over all probability measures $\mu$ on $H$. The functional in (\ref{bbgzfunc}) can thus be considered as a {\it weighted electrostatic energy} of a probability measure $\mu$ on $H$ and their result gives a beautiful analogue of the classical weighted logarithmic energy minimization for weights and compact sets in the complex plane. We will elaborate on this in Section \ref{sec:finalcom}.

We specialize to the situation where $X=\PP^n$, complex $n-$dimensional projective space. For probability measures $\mu$ on compact subsets of $\CC^n \subset \PP^n$ we define two functionals $J(\mu)$ and $W(\mu)$. Both involve discrete approximations to $\mu$ and multivariate Vandermonde determinants, and each has a clear one-variable analogue. Theorem \ref{overthm} implicitly shows that $J(\mu)$, defined via $L^2-$type approximation, is related to $\exp(-E^*(\mu))$. This is made explicit in Corollary \ref{aftergap}. We also show that $W(\mu)$, which uses $L^{\infty}-$type approximation, coincides with $J(\mu)$. These functionals give a direct interpretation of $E^*(\mu)$, which we prefer to call the {\it pluripotential energy} of $\mu$. For a continuous function $Q$ on a compact subset $H\subset \CC^n$ we define weighted versions of these functionals, $J^Q(\mu)$ and $W^Q(\mu)$, related to (\ref{bbgzfunc}) (Corollary \ref{finalcor} and equation (\ref{wtden})). We remark that it is essential to consider the weighted versions of these functionals even if one only wants to prove unweighted results. 

One of the origins for the approach used in this paper lies in the large deviation results for the empirical measure of certain random matrix ensembles (a good general reference on the subject of random matrices is \cite{AGZ}). The joint probability distribution of the eigenvalues in the Gaussian unitary ensemble (GUE) is the (square of a) weighted Vandermonde determinant in one variable, normalized to give a probability measure. The large deviation result expresses the asymptotic value (as $d\to \infty$) of the average of the joint probability distribution of a point ${\bf a}=(a_1,...,a_d)\in H^d$ for a compact set $H\subset \CC$ as the discrete measures $\frac{1}{d}\sum_{j=1}^d \delta(a_j)$ approach a fixed probability measure $\mu$ in $H$.

G. Ben Arous and A. Guionnet \cite{BeG}, building on work of Voiculescu, first gave a large deviation result for the GUE. Subsequently it was extended to any unitary invariant ensemble (equivalently, any (square of a) weighted Vandermonde determinant in one variable) (cf., \cite{AGZ}). The essential part of the rate function for the large deviation results, i.e., the part which is nonlinear in $\mu$, when normalized, is the (negative of the) logarithmic energy of the planar measure $\mu$. In this context it was called the {\it entropy} of the measure by Voiculescu. In effect, in this paper as well as in \cite{[BBGZ]} and \cite{[Ber]}, one starts from this point of view of logarithmic energy of planar measures in order to develop  a concept of energy of a measure in several variables.

 In \cite{[Ber]} Berman proves a large deviation result in the setting of determinantal point processes on a complex manifold $X$ with an appropriate Hermitian line bundle. The case $X=\PP^n$ and the hyperplane bundle over $X$ is the setting of pluripotential theory in $\CC^n$. Berman uses deep results from his work with Boucksom (cf., \cite{[BB]} and \cite{BBnew}) and results in \cite{[BBGZ]} and \cite{[BBN]}. In particular, the rate functional is given in terms of $E^*(\mu)$.

Our approach in this paper is somewhat different (although we certainly also use deep results from 
\cite{[BB]}, \cite{BBnew}, \cite{[BBGZ]} and \cite{[BBN]}). We follow the outline of the one-variable paper \cite{bloomvoic}; this outline was conjectured to work in the higher-dimensional case as well in \cite{btalk}. Section \ref{sec:back} gives the definitions of our functionals along with background material from (weighted) pluripotential theory and states our main result, Theorem \ref{overthm}. Elementary properties of our functionals $J(\mu)$ and $W(\mu)$, including a simple upper bound, are proved in Section \ref{sec:basic}. The more difficult lower bound, and the proof of Theorem \ref{overthm} for measures with finite pluripotential energy, is dealt with in Section \ref{sec:lbjmu}. Briefly, Markov's polynomial inequality, together with the equidistribution result of \cite{[BBN]} on Fekete points, is used to establish the lower bound in the case $\mu$ is a weighted equilibrium measure and then approximation arguments are used in the general case. Finally, we relate $J(\mu)=W(\mu)$ with $\exp (-E^*(\mu))$ for all probability measures $\mu$ and we give some final remarks in Sections \ref{sec:app} and \ref{sec:finalcom}. 

We are deeply indebted to S. Dinew for the proof of Proposition \ref{domprin}.

\section{Background and main result.}\label{sec:back}

Let $s=s(d):= {d+n \choose d}$ which is the dimension of the (complex) vector space of holomorphic polynomials ${\mathcal P}_d$ of degree at most $d$ in $\CC^n$. We fix a standard basis of monomials $\{e_1,...,e_s\}$ for ${\mathcal P}_d$ and given $s$ points $a_1,...,a_s\in \CC^d$, we write
$$VDM_d({\bf a})=VDM_d(a_1,...,a_s)=\det [e_i(a_j)]_{i,j=1,...,s}.$$
This is a polynomial of degree $l_d:=\sum_{j=1}^s \hbox{deg}e_j= \frac{n}{n+1}ds$ in $a_1,...,a_s$. More generally, given a nonnegative function $w:=e^{-Q}$, we write
\begin{equation} \label{wtdvdm} VDM^Q_d({\bf a})=VDM_d(a_1,...,a_s)w(a_1)^d \cdots w(a_s)^d.\end{equation}

For any compact set $H\subset \CC^n$, we have that
$$\lim_{d\to \infty} \bigl( \max_{{\bf a}\in H^s} |VDM_d({\bf a})|\bigr)^{1/l_d}=: \delta (H)$$
exists \cite{zah}; this is called the {\it transfinite diameter of $H$}. More generally, given an admissible weight $w=e^{-Q}$ on $H$, i.e., $w$ is uppersemicontinuous (usc) and $\{z\in H: w(z)>0\}$ is nonpluripolar, 
$$\lim_{d\to \infty} \bigl( \max_{{\bf a}\in H^s} |VDM^Q_d({\bf a})|\bigr)^{1/l_d}=: \delta^w (H)$$
exists \cite{[BL]}; this is called the {\it weighted transfinite diameter of $H,w$}. For simplicity, we define the slightly modified versions
$$\bar \delta (H):= \delta (H)^{\frac{n}{n+1}} \ \hbox{and} \ \bar \delta^w (H):= \delta^w (H)^{\frac{n}{n+1}}.$$

Given a compact set $H\subset \CC^n$ and a measure $\nu$ on $H$, the pair $(H,\nu)$ satisfies a {\it Bernstein-Markov property} if for all $p_d\in \mathcal P_d$, 	
\begin{equation}\label{bernmark}||p_d||_H\leq  M_d||p_d||_{L^2(\nu)}  \ \hbox{with} \ \limsup_{d\to \infty} M_d^{1/d} =1.\end{equation}
Then we have, for $(H,\nu)$ satisfying a Bernstein-Markov property (\ref{bernmark}),
$$\lim_{d\to \infty} [\int_{H^s} |VDM_d({\bf a})|^2d\nu({\bf a})]^{1/2l_d}= \delta (H)$$
(cf., \cite{[BL]}). Here we use the shorthand notation 
$$d\nu({\bf a}):= d\nu(a_1) \cdots d\nu(a_s)$$
for the product measure on $H^s$.

Let ${\mathcal M}(H)$ denote the space of probability measures on $H$. The weak-* topology on ${\mathcal M}(H)$ is given as follows . A 
neighborhood basis of any $\mu \in {\mathcal M}(H)$ is given by sets of the form 
$$\{\nu \in {\mathcal M}(H):
|\int_H f _i (d\mu - d\nu )| < \epsilon \ \hbox{for} \  i = 1,..., k\}$$ 
where $\epsilon >0$  and $f_1,...,f_k$ are continuous functions on $H$. We note that 
${\mathcal M}(H)$ is a complete metrizable space and a neighborhood basis 
of $\mu \in {\mathcal M}(H)$ is given by sets of the form 
$$G(\mu, k, \epsilon) := \{\nu  \in {\mathcal M}(H):
|\int_H x^{\alpha}y^{\beta} (d\mu - d\nu )| < \epsilon \ \hbox{for} \ |\alpha| +|\beta|\leq k\};$$
i.e., all probability measures on $H$ whose (real) moments, 
up to order $k$, are within $\epsilon$ of the corresponding moments for $\mu$. 

We write $L(\CC^n)$ for the set of all plurisubharmonic (psh) functions $u$ on $\CC^n$ with the property that $u(z) - \log |z| = 0(1), \ |z| \to \infty$ and
$$ L^+(\CC^n)=\{u\in L(\CC^n): u(z)\geq \log^+|z| + C\}$$
where $C$ is a constant depending on $u$. For $H\subset \CC^n$ compact and an admissible weight function $w=e^{-Q}$ on $H$, we define the {\it weighted pluricomplex Green function} $V^*_{H,Q}(z):=\limsup_{\zeta \to z}V_{H,Q}(\zeta)$ where
$$V_{H,Q}(z):=\sup \{u(z):u\in L(\CC^n), \ u\leq Q \ \hbox{on} \ H\}. $$
The case $w\equiv 1$ on $H$; i.e., $Q\equiv 0$, is the ``unweighted'' case and we simply write $V_H$. We have $V^*_{H,Q}\in L^+(\CC^n)$ and we call the complex Monge-Amp\`ere measure 
$$\mu_{eq}(H,Q):=(dd^cV_{H,Q}^*)^n$$
the {\it weighted equilibrium measure}; if $Q\equiv 0$ we write $\mu_{eq}(H)=(dd^cV_{H}^*)^n$. An example which we use later is the case where $T=\{(z\in \CC^n: |z_1|=\cdots =|z_n|=1\}$ is the unit torus; then 
\begin{equation}\label{torus} V_T(z) =\max_{j=1,...,n}\max [\log |z_j|, 0]. \end{equation} We say $H$ is locally regular if for each $z\in H$ the unweighted pluricomplex Green function for the sets $H\cap \overline{ B(z,r)}, \ r>0$ are continuous. Here $B(z,r)$ denotes the Euclidean ball with center $z$ and radius $r$. If $H$ is locally regular and $Q$ is continuous, then $V_{H,Q}$ is continuous (cf, \cite{sic}). We will use the elementary fact that for such $H$ and $Q$, 
\begin{equation} \label{suppw} V_{H,Q}(z)=V_{H,Q}^*(z)\leq Q(z) \ \hbox{on} \ H. \end{equation}
In general, it is known that 
$$\hbox{supp}(\mu_{eq}(H,Q))\subset \{z\in H: V_{H,Q}^*(z)\geq Q(z)\}$$
and that $V_{H,Q}^*=Q$ on $\hbox{supp}(\mu_{eq}(H,Q))$ except perhaps a pluripolar set (cf., \cite{ST}, Appendix B).

We remark that for locally bounded psh functions, e.g., for $u\in L^+(\CC^n)$, $(dd^cu)^n$ is well-defined as a positive measure. For simplicity, we choose our definition of $dd^c=\frac{i}{\pi}\partial \bar \partial$ so that 
$$\int_{\CC^n} (dd^cu)^n=1 \ \hbox{for all} \ u\in L^+(\CC^n).$$
For arbitrary $u\in L(\CC^n)$ one can define the {\it nonpluripolar Monge-Amp\`ere measure} as the weak-* limit
$$NP(dd^cu)^n:=\lim_{j\to \infty} \bigl( {\bf 1}_{\{u >-j\}}\cdot (dd^c\max[u, -j]) ^n\bigr)$$
(cf., \cite{bedtay}). For $u\in L(\CC^n)$ with \begin{equation} \label{nonpp} \int_{\CC^n} NP(dd^cu)^n=1,\end{equation}
we write $(dd^cu)^n:= NP(dd^cu)^n$. We only consider $u\in L(\CC^n)$ satisfying (\ref{nonpp}) in this paper.  

Our setting will be as follows. We let $H$ be a nonpluripolar compact set in $\CC^n$ with $V_H$ continuous and we fix a positive measure $\nu$ on $H$ with total mass at most one. We will assume $(H,\nu)$ satisfies a density property: there exists $T>0$ so that
\begin{equation} \label{massdens}\nu(B(z_0,r))\geq r^T\end{equation}
for all $z_0\in H$ and all $r<r(z_0)$ where $r(z_0)>0$. The density hypothesis implies that $(H,\nu)$ satisfies a Bernstein-Markov property (\ref{bernmark}); cf., \cite{bloom}, \cite{massdens} and the end of Section \ref{sec:finalcom}.

Given $\mu \in {\mathcal M}(H)$, and given a fixed neighborhood $G$ of $\mu$ in ${\mathcal M}(H)$, for each $s=1,2,...$ we set
$$\tilde G_s(\mu)=\tilde G_s(\mu,H):= \{{\bf a} \in H^s: \frac{1}{s}\sum_{j=1}^s \delta (a_j)\in G\}.$$
Technically, $\tilde G_s(\mu)$ depends on $G$, but not $\mu$; however, to emphasize that we begin with the measure $\mu$ in constructing these neighborhoods $G$, we maintain the 
notation. Define
$$J_d(\mu,G):=[\int_{\tilde G_s(\mu)}|VDM_d({\bf a})|^2d\nu ({\bf a})]^{1/2ds}$$
and
$$W_d(\mu,G):=\sup \{ |VDM_d({\bf a})|^{1/ds}: {\bf a} \in \tilde G_s(\mu)\}.$$

\begin{definition} \label{jwmu} We define
$$J(\mu):=\inf_{G \ni \mu} J(\mu,G) \ \hbox{where} \ J(\mu,G):=\limsup_{d\to \infty} J_d(\mu,G)$$
and
$$W(\mu):=\inf_{G \ni \mu} W(\mu,G) \ \hbox{where} \ W(\mu,G):=\limsup_{d\to \infty} W_d(\mu,G)\}.$$\end{definition}

\noindent Here the infimum is taken over all neighborhoods $G$ of the measure $\mu$ in ${\mathcal M}(H)$. 

Our main result is the following:

\begin{theorem}\label{overthm} Let $H$ be a nonpluripolar, compact, convex set. For $\mu \in {\mathcal M}(H)$ we have 
\begin{equation} \label{ell}\log J(\mu)=\log W(\mu)= \inf_{w} [\log \bar \delta^w (H) +\int_H Qd\mu]\end{equation}
where the infimum is taken over all  $w=e^{-Q}>0$ continuous on $H$.
\end{theorem}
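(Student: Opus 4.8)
Throughout write $R(\mu):=\inf_{w}\,[\log \bar\delta^{w}(H)+\int_H Q\,d\mu]$ for the right-hand side of (\ref{ell}), the infimum being over continuous weights $w=e^{-Q}>0$. The plan is to squeeze the three quantities together by proving
$$R(\mu)\ \le\ \log J(\mu)\ \le\ \log W(\mu)\ \le\ R(\mu).$$
The two outer inequalities $\log J(\mu)\le\log W(\mu)\le R(\mu)$ are the ``easy'' upper bound, valid for any nonpluripolar compact $H$; the inequality $R(\mu)\le\log J(\mu)$ is the hard lower bound, and it is here that convexity of $H$ (yielding a Markov inequality and regularity of $V_H$) and the density hypothesis (\ref{massdens}) on $\nu$ enter. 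Observe first that if $R(\mu)=-\infty$ then the upper bound already forces $\log J(\mu)=\log W(\mu)=-\infty$; thus only measures of finite energy, for which $R(\mu)$ is finite, require the lower bound.

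For the upper bound, fix a continuous weight $w=e^{-Q}>0$. The identity $|VDM_d({\bf a})|=|VDM^{Q}_d({\bf a})|\,e^{d\sum_j Q(a_j)}$ and the fact that on $\tilde G_s(\mu)$ the empirical measure $\frac1s\sum_j\delta(a_j)$ lies in $G$ give $\frac1s\sum_j Q(a_j)=\int Q\,d(\frac1s\sum_j\delta(a_j))\to\int_H Q\,d\mu$ as $G$ shrinks to $\{\mu\}$, by continuity of $Q$. Bounding $|VDM^{Q}_d|$ by its maximum over $H^s$ and using $(\max_{H^s}|VDM^{Q}_d|)^{1/ds}=[(\max_{H^s}|VDM^{Q}_d|)^{1/l_d}]^{l_d/ds}\to\delta^{w}(H)^{n/(n+1)}=\bar\delta^{w}(H)$, since $l_d/ds=n/(n+1)$, yields $W(\mu,G)\le\bar\delta^{w}(H)\,e^{\int_H Q\,d\mu+o(1)}$; letting $G\downarrow\{\mu\}$ and then taking the infimum over $w$ gives $\log W(\mu)\le R(\mu)$. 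The inequality $J(\mu)\le W(\mu)$ is immediate because $\nu$ has total mass at most one, so $\int_{\tilde G_s(\mu)}|VDM_d|^2\,d\nu\le(\sup_{\tilde G_s(\mu)}|VDM_d|)^2$.

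For the lower bound I first treat a weighted equilibrium measure $\mu=\mu_{eq}(H,Q_0)$ with $Q_0$ smooth. Let $\mathcal F_d=(f_1,\dots,f_s)\in H^s$ be Fekete points, i.e. maximizers of $|VDM^{Q_0}_d|$; by the equidistribution theorem of \cite{[BBN]} their empirical measures converge weak-$*$ to $\mu_{eq}(H,Q_0)=\mu$, so for each fixed $G$ and all large $d$ one has $\mathcal F_d\in\tilde G_s(\mu)$ and $\frac1s\sum_jQ_0(f_j)\to\int Q_0\,d\mu$. From $|VDM_d(\mathcal F_d)|=|VDM^{Q_0}_d(\mathcal F_d)|\,e^{d\sum_jQ_0(f_j)}$ and $|VDM^{Q_0}_d(\mathcal F_d)|=\max_{H^s}|VDM^{Q_0}_d|$ we get $|VDM_d(\mathcal F_d)|^{1/ds}\to\bar\delta^{w_0}(H)\,e^{\int Q_0\,d\mu}$. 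To upgrade this $L^\infty$ statement to the $L^2$ functional $J$, I spread mass over the product of small balls $\prod_jB(f_j,\rho_d)$: using the weighted Bernstein--Walsh inequality together with the equality $V_{H,Q_0}=Q_0$ on $\hbox{supp}\,\mu_{eq}(H,Q_0)$ to control $|VDM^{Q_0}_d|$ near $\mathcal F_d$, and a Markov/Cauchy estimate for its variation, one checks $|VDM^{Q_0}_d({\bf a})|\ge\frac12|VDM^{Q_0}_d(\mathcal F_d)|$ on this product for $\rho_d\to0$ chosen with $d\,\omega(\rho_d)\to0$, where $\omega$ is the modulus of continuity of $Q_0$ and $V_{H,Q_0}$ (smoothness of $Q_0$ makes $\omega$ H\"older, so $\rho_d$ may be taken polynomially small). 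The density hypothesis (\ref{massdens}) gives $\nu(B(f_j,\rho_d))\ge\rho_d^{T}$, whence $\int_{\tilde G_s(\mu)}|VDM_d|^2\,d\nu\ge c\,|VDM_d(\mathcal F_d)|^2\,\rho_d^{Ts}\,e^{-2ds\,\omega(\rho_d)}$. Taking $1/(2ds)$ powers, the factors $c^{1/(2ds)}$, $\rho_d^{T/(2d)}$ and $e^{-\omega(\rho_d)}$ all tend to $1$, so $J(\mu,G)\ge\bar\delta^{w_0}(H)\,e^{\int Q_0\,d\mu}$ for every $G$, i.e. $\log J(\mu)\ge\log\bar\delta^{w_0}(H)+\int Q_0\,d\mu\ge R(\mu)$. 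With the upper bound this proves (\ref{ell}) for such $\mu$, the infimum being attained at $w_0$.

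The general case follows by approximation, using the structural fact (noted after Definition~\ref{jwmu}) that $J(\mu,G)$ depends only on $G$. If $\lambda\in G$ is any measure for which (\ref{ell}) is already known, then choosing a basic neighborhood $G'\subset G$ of $\lambda$ gives $\tilde G'_s(\lambda)\subset\tilde G_s(\mu)$, hence $J(\lambda)\le J(\lambda,G')\le J(\mu,G)$. Applying this with $\lambda=\mu_{eq}(H,Q_0^{(k)})$ for smooth weights chosen so that $\mu_{eq}(H,Q_0^{(k)})\to\mu$ weak-$*$ and $R(\mu_{eq}(H,Q_0^{(k)}))\to R(\mu)$, we obtain $e^{R(\mu)}=\lim_k e^{R(\mu_{eq}(H,Q_0^{(k)}))}=\lim_k J(\mu_{eq}(H,Q_0^{(k)}))\le J(\mu,G)$ for every $G\ni\mu$, and therefore $\log J(\mu)\ge R(\mu)$, completing the squeeze. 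I expect the main obstacle to be exactly the construction of this energy-convergent approximating sequence: since $R$ is an infimum of weak-$*$ continuous affine functionals it is concave and upper semicontinuous, so $\limsup_k R(\mu_{eq}(H,Q_0^{(k)}))\le R(\mu)$ comes for free, and the real content is the weak-$*$ density of smooth weighted equilibrium measures together with the lower bound $\liminf_k R(\mu_{eq}(H,Q_0^{(k)}))\ge R(\mu)$, i.e. continuity of the underlying pluripotential energy along the approximation. This is where the deep differentiability and projection results of \cite{[BB]}, \cite{BBnew} and \cite{[BBGZ]} are needed; a secondary but essential point is keeping the Markov/mass-density localization in the third paragraph lossless, which is the reason the approximating weights are taken smooth and $H$ is assumed convex.
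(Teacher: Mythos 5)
Your architecture matches the paper's (easy upper bound; lower bound first for weighted equilibrium measures via Fekete points, Bernstein--Markov/density, and a localization around the Fekete configuration; then approximation), and your observation that the case $R(\mu)=-\infty$ follows from the upper bound alone is correct. But there are two genuine gaps. The first is the localization step: you claim that for a \emph{smooth} weight $Q_0$ a ``Markov/Cauchy estimate'' gives $|VDM^{Q_0}_d({\bf a})|\ge\frac12|VDM^{Q_0}_d(\mathcal F_d)|$ on $\prod_j B(f_j,\rho_d)$ with $\rho_d$ polynomially small. Markov's inequality controls the variation of the holomorphic factor $VDM_d$ only relative to its \emph{unweighted} sup norm over $H^s$, which can exceed $|VDM_d(\mathcal F_d)|$ by a factor $e^{cds}$ (the Fekete points maximize the \emph{weighted} determinant); with polynomially small $\rho_d$ the error term $d^2e^{cds}\rho_d$ blows up, and taking $\rho_d$ exponentially small in $d s$ destroys the mass factor $\rho_d^{Ts/(2ds)}$. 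The Cauchy/Bernstein--Walsh route can be salvaged, but only granted H\"older continuity of $V_{H,Q_0}$ (so that $d\,\omega(\rho_d)\to0$ is compatible with $\log(1/\rho_d)=o(d)$), a nontrivial fact you assert without proof. The paper sidesteps all of this by first taking $w=e^{-Q}$ to be a \emph{real polynomial}: then $|VDM^Q_d|^2$ is itself a nonnegative polynomial whose sup norm over $H^s$ \emph{equals} its value at the maximizer, so Markov's inequality (integrated along segments, using convexity) gives a lossless variation estimate on a polydisc of radius $e^{-\sqrt d}$ (Lemma \ref{pospoly}); continuous weights are then reached by uniform polynomial approximation plus the upper semicontinuity of the functional (Lemma \ref{usc}). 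You should either adopt that device or supply the H\"older regularity of $V_{H,Q_0}$ on convex bodies.

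The second gap is the one you flag yourself: the general case rests on producing smooth (or continuous) weights $Q_0^{(k)}$ with $\mu_{eq}(H,Q_0^{(k)})\to\mu$ weak-$*$ \emph{and} $R(\mu_{eq}(H,Q_0^{(k)}))\to R(\mu)$, and as you note the upper semicontinuity of $R$ gives only the wrong half of that. This is precisely the hard content of the paper's Step 3: one takes a potential $u\in L(\CC^n)$ with $(dd^cu)^n=\mu$ and $\int_H u\,d\mu>-\infty$ (the existence of which, for every $\mu$ with $R(\mu)>-\infty$, itself requires the variational results of \cite{[BBGZ]} and \cite{[GZ2]} via Proposition \ref{gap} --- a point your reduction to ``finite $R(\mu)$'' leaves unaddressed), regularizes it monotonically by $u_j=V_{H,v_j|_H}$ with $v_j\downarrow u$ smooth, identifies $\lim_j u_j=u$ via the domination principle (Proposition \ref{domprin}), and gets $\int_H u_j\,d\mu_j\to\int_H u\,d\mu$ from Theorem 2.1 of \cite{[CGZ]}. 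Without an argument of this kind the squeeze $e^{R(\mu)}\le J(\mu,G)$ is not established, so the proposal proves the theorem only for equilibrium measures of polynomial weights (and, modulo the first gap, of smooth weights), not for general $\mu\in{\mathcal M}(H)$.
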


The weighted versions of our functionals are defined starting with
$$J^Q_d(\mu,G):=[\int_{\tilde G_s(\mu)}|VDM^Q_d({\bf a})|^2d\nu ({\bf a})]^{1/2ds}$$
and
$$W^Q_d(\mu,G):=\sup \{ |VDM^Q_d({\bf a})|^{1/ds}: {\bf a} \in \tilde G_s(\mu)\}.$$

\begin{definition} \label{jwmuq} We define
$$J^Q(\mu):=\inf_{G \ni \mu} J^Q(\mu,G) \ \hbox{where} \ J^Q(\mu,G):=\limsup_{d\to \infty} J^Q_d(\mu,G)$$
and
$$W^Q(\mu):=\inf_{G \ni \mu} W^Q(\mu,G) \ \hbox{where} \ W^Q(\mu,G):=\limsup_{d\to \infty} W^Q_d(\mu,G).$$\end{definition}

\begin{corollary} \label{overcor} For $w=e^{-Q}>0$ a continuous weight function on a nonpluripolar, compact, convex set $H$, and for $\mu \in {\mathcal M}(H)$ we have
$$\log J^Q(\mu)=\log W^Q(\mu)=\inf_{\tilde w} [\log \bar \delta^{\tilde w (H)} +\int_H \tilde Qd\mu] -\int_H Q d\mu$$
where the infimum is taken over all  $\tilde w=e^{-\tilde Q}>0$ continuous on $H$.
\end{corollary}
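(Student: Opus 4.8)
The plan is to reduce Corollary \ref{overcor} directly to Theorem \ref{overthm} by factoring the weight out of the weighted Vandermonde determinant. The key algebraic observation is that, writing $\mu_{\bf a}:=\frac{1}{s}\sum_{j=1}^s \delta(a_j)$ for the empirical measure of a configuration ${\bf a}=(a_1,\dots,a_s)$, the definition (\ref{wtdvdm}) gives
$$|VDM^Q_d({\bf a})|^{1/ds}=|VDM_d({\bf a})|^{1/ds}\,\exp\Bigl(-\tfrac{1}{s}\sum_{j=1}^s Q(a_j)\Bigr)=|VDM_d({\bf a})|^{1/ds}\,\exp\Bigl(-\int_H Q\,d\mu_{\bf a}\Bigr).$$
Thus the passage from the unweighted to the weighted functional is controlled entirely by the factor $\exp(-\int_H Q\,d\mu_{\bf a})$.

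Next I would exploit that the constraint defining $\tilde G_s(\mu)$ forces $\mu_{\bf a}\in G$. Since $Q$ is continuous on the compact set $H$, the linear functional $\nu\mapsto \int_H Q\,d\nu$ is weak-$*$ continuous, so for any $\eta>0$ one may choose the neighborhood $G$ of $\mu$ small enough that $|\int_H Q\,d\nu-\int_H Q\,d\mu|<\eta$ for all $\nu\in G$. For ${\bf a}\in\tilde G_s(\mu)$ this pins the weight factor between $e^{-\int_H Q\,d\mu}e^{-\eta}$ and $e^{-\int_H Q\,d\mu}e^{\eta}$, uniformly in ${\bf a}$ and in $d$. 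Inserting these two-sided bounds into the definition of $W^Q_d(\mu,G)$ (by taking the supremum) and into that of $J^Q_d(\mu,G)$ (by inserting them in the integrand of $|VDM^Q_d|^2$, pulling the resulting constant out of the integral, and extracting the $1/2ds$ root) yields
$$e^{-\int_H Q\,d\mu}e^{-\eta}\,W_d(\mu,G)\le W^Q_d(\mu,G)\le e^{-\int_H Q\,d\mu}e^{\eta}\,W_d(\mu,G),$$
together with the identical pair of inequalities with $J$ in place of $W$.

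I would then take $\limsup_{d\to\infty}$ of these inequalities; since the multiplicative constant $e^{-\int_H Q\,d\mu}e^{\pm\eta}$ is independent of $d$, the same bounds pass to $W^Q(\mu,G)$ and $W(\mu,G)$ (and to their $J$-counterparts). Finally I take the infimum over neighborhoods $G$ of $\mu$: because shrinking $G$ simultaneously drives $\eta\to0$ and realizes the defining infima $W(\mu)=\inf_G W(\mu,G)$ and $J(\mu)=\inf_G J(\mu,G)$, the $e^{\pm\eta}$ factors disappear in the limit and I obtain $\log W^Q(\mu)=\log W(\mu)-\int_H Q\,d\mu$ and $\log J^Q(\mu)=\log J(\mu)-\int_H Q\,d\mu$. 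Substituting the identity $\log J(\mu)=\log W(\mu)=\inf_{\tilde w}[\log\bar\delta^{\tilde w}(H)+\int_H \tilde Q\,d\mu]$ from Theorem \ref{overthm} gives exactly the asserted formula and, in particular, the equality $\log J^Q(\mu)=\log W^Q(\mu)$.

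I expect the only genuine care to lie in the bookkeeping of the two nested limiting processes—the infimum over $G$ and the $\limsup$ over $d$—and in verifying that a single neighborhood $G$ can be used to control the weight factor, the $J$-functional, and the $W$-functional at once. Once the factorization above is in hand this is routine, precisely because the weight contributes a bounded multiplicative constant that does not depend on $d$ and can be made arbitrarily close to $e^{-\int_H Q\,d\mu}$ by shrinking $G$.
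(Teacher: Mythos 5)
Your proposal is correct and follows essentially the same route as the paper: the two-sided bound on the weight factor $w(a_1)^d\cdots w(a_s)^d$ over $\tilde G_s(\mu)$ obtained from weak-$*$ control of $\int_H Q\,d\mu_{\bf a}$ is exactly the paper's inequality (\ref{twelve}), leading to (\ref{jclone}) and Proposition \ref{easyprop}, after which Theorem \ref{overthm} is substituted just as you do. The bookkeeping of the nested limits ($\limsup_d$, then $\inf_G$, then $\epsilon\to 0$) is handled in the paper in the same order you describe, so there is nothing to add.
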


We also show that if $\mu:=\mu_{eq}(H,\tilde Q)$ where $\tilde w=e^{-\tilde Q}>0$ is continuous, we can replace the ``$\limsup_{d\to \infty}$'' in Definitions \ref{jwmu} and \ref{jwmuq} by ``$\liminf_{d\to \infty}$'' thus by ``$\lim_{d\to \infty}$.''

\section{Elementary properties.}\label{sec:basic}

We prove some elementary properties of the functionals in Definitions \ref{jwmu} and \ref{jwmuq}. In this section $H$ need not be convex. Our first observation is that each functional is uppersemicontinous on ${\mathcal M}(H)$. Indeed, we prove the following.

 \begin{lemma} \label{usc}The functionals
$$\mu \to \inf_{G\ni \mu} \limsup_{d\to \infty} \log J_d(\mu,G)$$
and
$$\mu \to \inf_{G\ni \mu} \liminf_{d\to \infty} \log J_d(\mu,G)$$
defined for $\mu \in {\mathcal M}(H)$ are usc; analogous statements hold for the functionals $J^Q_d(\mu,G), \ W_d(\mu,G)$ and $W^Q_d(\mu,G)$.
\end{lemma}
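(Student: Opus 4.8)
The plan is to exploit the fact that the inner quantity $J_d(\mu,G)$ does not actually depend on $\mu$. As noted just before Definition \ref{jwmu}, the set $\tilde G_s(\mu)$ depends only on the neighborhood $G$ and not on $\mu$, and therefore so do $J_d(\mu,G)$ and $J(\mu,G)=\limsup_{d\to\infty}J_d(\mu,G)$. Writing $\Phi(G):=\limsup_{d\to\infty}\log J_d(\mu,G)$ to emphasize this, the functional in question has the pure form
$$F(\mu)=\inf_{G\ni\mu}\Phi(G),$$
an infimum over neighborhoods $G$ of $\mu$ of a set function $\Phi$ that is insensitive to the base point. Upper semicontinuity of any $F$ of this form is a formal consequence of the definition of the topology, so the main content of the proof is simply to verify this general principle.

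Concretely, I would fix $\mu_0\in\mathcal{M}(H)$ and $\epsilon>0$. By the definition of the infimum there is a neighborhood $G_0$ of $\mu_0$ with $\Phi(G_0)<F(\mu_0)+\epsilon$. Choosing an open set $U$ with $\mu_0\in U\subseteq G_0$ (possible since $\mathcal{M}(H)$ is metrizable and $G_0$ is a neighborhood of $\mu_0$), I observe that for every $\mu\in U$ the set $G_0$ is also a neighborhood of $\mu$, because it contains the open set $U\ni\mu$. Hence $F(\mu)=\inf_{G\ni\mu}\Phi(G)\le\Phi(G_0)<F(\mu_0)+\epsilon$ for all $\mu\in U$. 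Since $\epsilon>0$ was arbitrary, this gives $\limsup_{\mu\to\mu_0}F(\mu)\le F(\mu_0)$, i.e.\ upper semicontinuity at $\mu_0$, and $\mu_0$ was arbitrary.

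The same argument applies verbatim to the three remaining functionals and to the $\liminf$ variant: the only property used is that the quantity being bounded, whether built from $J_d$, $J^Q_d$, $W_d$ or $W^Q_d$ and with either $\limsup$ or $\liminf$ in $d$, depends on $G$ alone. Thus no separate argument is needed in each case. The ``hard part'' is therefore not a hard part at all; the sole point requiring care is the observation that $G_0$ remains a neighborhood of all nearby measures, which is exactly where the openness of the chosen $U\subseteq G_0$ is used. Without isolating this one sees only an infimum over a base-point–varying family of neighborhoods and might mistakenly look for a monotonicity or continuity estimate on $\Phi$ that is neither available nor needed.
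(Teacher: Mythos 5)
Your proof is correct and is essentially the paper's argument: both rest on the observation, made just before Definition \ref{jwmu}, that $\tilde G_s(\mu)$ (hence $J_d(\mu,G)$) depends only on $G$, so the functional is an infimum over neighborhoods of a base-point-insensitive quantity and upper semicontinuity is formal. The only cosmetic difference is that the paper phrases this with a sequence $\mu_n\to\mu$ and nested neighborhoods $G_n\subset G$ of $\mu_n$, using the resulting monotonicity $\log J_d(\mu_n,G_n)\le \log J_d(\mu,G)$, whereas you reuse a single near-optimal $G_0$ as a neighborhood of every $\mu$ in an open $U\subset G_0$; both verifications are equally short and, as you note, cover all four functionals and both the $\limsup$ and $\liminf$ variants uniformly.
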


\begin{proof} Given $\mu \in {\mathcal M}(H)$, take a sequence of measures $\mu_n$ in ${\mathcal M}(H)$ converging weak-* to $\mu$ and fix a neighborhood $G\subset {\mathcal M}(H)$ of $\mu$. For $n$ sufficiently large, let $G_n\subset {\mathcal M}(H)$ be a neighborhood of $\mu_n$ with $G_n \subset G$. Then
$$\log J_d(\mu_n,G_n) \leq \log J_d(\mu,G)$$ and hence
$$ \limsup_{d\to \infty} \log J_d(\mu_n,G_n) \leq \limsup_{d\to \infty} \log J_d(\mu,G);$$
thus for $n$ sufficiently large
$$\inf_{G_n\ni \mu_n}\limsup_{d\to \infty} \log J_d(\mu_n,G_n) \leq \limsup_{d\to \infty} \log J_d(\mu,G).$$
We conclude that
$$\limsup_{n\to \infty}\inf_{G_n\ni \mu_n}\limsup_{d\to \infty} \log J_d(\mu_n,G_n) \leq \limsup_{d\to \infty} \log J_d(\mu,G)$$
for all neighborhoods $G$ of $\mu$; thus
$$\limsup_{n\to \infty}\inf_{G_n\ni \mu_n}\limsup_{d\to \infty} \log J_d(\mu_n,G_n) \leq  \inf_{G \ni \mu} \limsup_{d\to \infty} \log J_d(\mu,G)$$
which is the desired (first) result. A similar proof works for the functional $\mu \to \inf_{G \ni \mu} \liminf_{d\to \infty} \log J_d(\mu,G)$.
\end{proof}

Some elementary inequalities follow from the definitions.

\begin{lemma} \label{easylemma} Let $\mu \in {\mathcal M}(H)$ and let $w=e^{-Q}$ be admissible  on $H$. Then
$$J^Q(\mu)\leq W^Q(\mu)\leq \bar \delta^w (H) .$$
\end{lemma}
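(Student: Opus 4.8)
The plan is to prove the two inequalities separately, both directly from the definitions, and both by elementary estimates that hold for every neighborhood $G$ of $\mu$ and every degree $d$.

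For the first inequality $J^Q(\mu)\le W^Q(\mu)$, the idea is simply that an $L^2$-average over $\tilde G_s(\mu)$ is dominated by the $L^\infty$-norm (the sup) times the mass of the region. Concretely, for fixed $d$ and $G$ I would bound
\begin{equation*}
\int_{\tilde G_s(\mu)}|VDM^Q_d({\bf a})|^2\,d\nu({\bf a}) \le \Bigl(\sup_{{\bf a}\in\tilde G_s(\mu)}|VDM^Q_d({\bf a})|\Bigr)^2 \cdot \nu^{\otimes s}\bigl(\tilde G_s(\mu)\bigr),
\end{equation*}
and then take the $1/(2ds)$-power. Since $\nu$ has total mass at most one, the product measure of $\tilde G_s(\mu)$ is at most one, so that factor contributes a quantity $\le 1$ after taking roots; hence $J^Q_d(\mu,G)\le W^Q_d(\mu,G)$. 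Passing to $\limsup_{d\to\infty}$ gives $J^Q(\mu,G)\le W^Q(\mu,G)$, and taking $\inf_{G\ni\mu}$ yields $J^Q(\mu)\le W^Q(\mu)$.

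For the second inequality $W^Q(\mu)\le\bar\delta^w(H)$, I would compare the constrained sup over $\tilde G_s(\mu)\subset H^s$ with the unconstrained sup over all of $H^s$, which is exactly the object whose root defines the weighted transfinite diameter. Namely, for each $d$,
\begin{equation*}
\sup_{{\bf a}\in\tilde G_s(\mu)}|VDM^Q_d({\bf a})| \le \max_{{\bf a}\in H^s}|VDM^Q_d({\bf a})|.
\end{equation*}
Raising to the $1/(ds)$-power and recalling $l_d=\frac{n}{n+1}ds$, I would rewrite the exponent as $\frac{1}{ds}=\frac{n}{n+1}\cdot\frac{1}{l_d}$, so that $\bigl(\max_{H^s}|VDM^Q_d|\bigr)^{1/ds}=\bigl(\max_{H^s}|VDM^Q_d|\bigr)^{\frac{n}{n+1}\cdot\frac{1}{l_d}}$. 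As $d\to\infty$ the inner root tends to $\delta^w(H)$ by the existence result of \cite{[BL]}, and the outer exponent $\frac{n}{n+1}$ converts this to $\delta^w(H)^{n/(n+1)}=\bar\delta^w(H)$. Thus $\limsup_{d\to\infty}W^Q_d(\mu,G)\le\bar\delta^w(H)$ for every $G$, and taking the infimum over $G$ gives $W^Q(\mu)\le\bar\delta^w(H)$.

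I do not anticipate a genuine obstacle here, since the statement is the easy half of the theory; the only points demanding a little care are bookkeeping ones. First, one must track the exponents so that the correct power $\frac{n}{n+1}$ emerges and matches the definition of $\bar\delta^w$. Second, in the $J\le W$ step one must use the hypothesis that $\nu$ has total mass at most one to ensure the product-measure factor is harmless after taking roots. The mildest subtlety is that these are $\limsup$ statements over $d$, so the inequalities must be verified at the level of each $J^Q_d$ and $W^Q_d$ before passing to the limit and then to the infimum over neighborhoods; since all the bounds are uniform in $d$, this passage is immediate.
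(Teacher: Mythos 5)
Your proposal is correct and follows essentially the same route as the paper: bound the $L^2$-average by the sup using $\nu(H)\le 1$, then bound the constrained sup by the sup over all of $H^s$ and invoke the existence of the weighted transfinite diameter, with the exponent bookkeeping $\frac{1}{ds}=\frac{n}{n+1}\cdot\frac{1}{l_d}$ accounting for the normalization $\bar\delta^w(H)=\delta^w(H)^{n/(n+1)}$. The paper's proof is merely a terser version of the same argument.
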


\begin{proof} We have, since $\nu(H)\leq 1$,
$$J^Q_d(\mu,G):=[\int_{\tilde G_s(\mu)}|VDM^Q_d({\bf a})|^2d\nu ({\bf a})]^{1/2ds}$$
$$\leq W^Q_d(\mu,G):=\sup \{ |VDM^Q_d({\bf a})|^{1/ds}: {\bf a} \in \tilde G_s(\mu)\}$$
$$\leq \sup \{ |VDM^Q_d({\bf a})|^{1/ds}: {\bf a} \in H^s\}.$$
From the definition of the weighted transfinite diameter, 
$$\lim_{d\to \infty}\sup \{ |VDM^Q_d({\bf a})|^{1/ds}: {\bf a} \in H^s\}=\bar \delta^w (H)$$
and the result follows.
\end{proof}

To prove Corollary \ref{overcor} from Theorem \ref{overthm}, and to get an upper bound on $J(\mu)$ and $W(\mu)$, we begin with a lemma.

\begin{lemma} \label{weakconv}Let $\mu \in {\mathcal M}(H)$ and let $Q$ be continuous on $H$. Given $\epsilon >0$, there exists a neighborhood $G \subset  {\mathcal M}(H)$ of $\mu$ with
$$|\int_H Q(d\mu -d\tilde \mu)|< \epsilon \ \hbox{for} \ \tilde \mu \in G.$$
\end{lemma}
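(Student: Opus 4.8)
The plan is to recognize that the statement is nothing more than the continuity of the map $\tilde\mu \mapsto \int_H Q\,d\tilde\mu$ in the weak-* topology, and to produce a neighborhood of $\mu$ in the explicit moment form $G(\mu,k,\epsilon')$ used throughout the paper. If one is content to invoke the first description of the weak-* topology (basic neighborhoods built from finitely many continuous test functions), then the set $\{\tilde\mu \in {\mathcal M}(H): |\int_H Q(d\mu - d\tilde\mu)| < \epsilon\}$ is itself a basic neighborhood of $\mu$ and there is essentially nothing to prove. The point is therefore to reconcile this with the moment-based neighborhood basis $G(\mu,k,\epsilon')$, which is what the mere continuity of $Q$ (rather than polynomiality) forces us to address.

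First I would approximate $Q$. Since $H$ is compact and $Q$ is continuous, the Stone--Weierstrass theorem produces a real polynomial $P = \sum_{|\alpha|+|\beta| \le k} c_{\alpha\beta} x^{\alpha}y^{\beta}$ with $||Q - P||_H < \epsilon/3$. Next I would choose the moment neighborhood: set $C := \sum_{|\alpha|+|\beta| \le k} |c_{\alpha\beta}|$ (if $P \equiv 0$ the conclusion is trivial, so assume $C > 0$) and take $G := G(\mu, k, \epsilon/(3C))$. By the definition of $G(\mu,k,\epsilon')$, for $\tilde\mu \in G$ each moment of order at most $k$ is within $\epsilon/(3C)$ of the corresponding moment of $\mu$, whence
$$|\int_H P(d\mu - d\tilde\mu)| \le \sum_{|\alpha|+|\beta|\le k} |c_{\alpha\beta}|\, \bigl|\int_H x^{\alpha}y^{\beta} (d\mu - d\tilde\mu)\bigr| < \tfrac{\epsilon}{3}.$$

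Finally I would combine these estimates via the triangle inequality, using that $\mu$ and $\tilde\mu$ are probability measures (total mass one) to bound the approximation error:
$$|\int_H Q(d\mu - d\tilde\mu)| \le |\int_H (Q-P)\,d\mu| + |\int_H P(d\mu - d\tilde\mu)| + |\int_H (Q-P)\,d\tilde\mu| \le ||Q-P||_H + \tfrac{\epsilon}{3} + ||Q-P||_H < \epsilon.$$
There is no genuine obstacle here; the only subtlety is matching the two equivalent descriptions of the weak-* topology, and the Stone--Weierstrass step is precisely what certifies that the moment neighborhoods $G(\mu,k,\epsilon')$ form a basis, so that the neighborhood $G$ can be taken in the form used elsewhere in the paper.
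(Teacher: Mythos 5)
Your proof is correct, but it takes a different route from the paper. The paper argues by contradiction: if no neighborhood works, then (using that ${\mathcal M}(H)$ is metrizable, so a countable neighborhood basis exists) one extracts a sequence $\mu_n\to\mu$ weak-* with $|\int_H Q(d\mu-d\mu_n)|\geq\epsilon$ for all $n$, contradicting weak-* convergence since $Q$ is continuous. That argument is two lines long but leans on metrizability to convert the failure of a neighborhood statement into a sequence, and it produces no explicit neighborhood. Your argument is direct and constructive: Stone--Weierstrass gives a polynomial $P$ with $\|Q-P\|_H<\epsilon/3$, the moment neighborhood $G(\mu,k,\epsilon/(3C))$ controls $|\int_H P(d\mu-d\tilde\mu)|$, and the triangle inequality (using that $\mu,\tilde\mu$ have total mass one) finishes. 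What your version buys is an explicit neighborhood in the moment form $G(\mu,k,\epsilon')$ that the paper uses to define $\tilde G_s(\mu)$, and in effect a proof of the paper's unproved assertion that the moment sets form a neighborhood basis of the weak-* topology on ${\mathcal M}(H)$; what it costs is the extra approximation step, which the paper's sequential argument sidesteps entirely. Both proofs are complete and valid.
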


\begin{corollary} \label{weakconvcor} Let $\mu \in {\mathcal M}(H)$ and let $Q$ be continuous on $H$. Given $\epsilon >0$, there exists a neighborhood $G \subset  {\mathcal M}(H)$ of $\mu$ with
$$|\int_H Q(d\mu -\frac{1}{s}\sum_{j=1}^s\delta(a_j))|< \epsilon \ \hbox{for} \ {\bf a} \in \tilde G_s(\mu)$$
for $s=1,2,...$.
\end{corollary}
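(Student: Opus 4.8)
The plan is to deduce Corollary \ref{weakconvcor} directly from Lemma \ref{weakconv} by observing that the normalized counting measures appearing in the corollary are exactly the probability measures defining the neighborhoods $\tilde G_s(\mu)$. First I would recall the definition
$$\tilde G_s(\mu)=\{{\bf a}\in H^s:\tfrac{1}{s}\textstyle\sum_{j=1}^s\delta(a_j)\in G\},$$
so that for every ${\bf a}\in\tilde G_s(\mu)$ the discrete measure $\tilde\mu_{\bf a}:=\frac{1}{s}\sum_{j=1}^s\delta(a_j)$ is itself an element of $G$, and it lies in ${\mathcal M}(H)$ since each $a_j\in H$ and the weights sum to one. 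The idea is then to apply Lemma \ref{weakconv} with this particular $\tilde\mu=\tilde\mu_{\bf a}$.

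Concretely, I would proceed as follows. Fix $\epsilon>0$. By Lemma \ref{weakconv} there is a neighborhood $G\subset{\mathcal M}(H)$ of $\mu$ such that
$$\Bigl|\int_H Q\,(d\mu-d\tilde\mu)\Bigr|<\epsilon\qquad\text{for all }\tilde\mu\in G.$$
Use this same $G$ to form $\tilde G_s(\mu)$ for each $s=1,2,\dots$. Then for any ${\bf a}\in\tilde G_s(\mu)$ the measure $\tilde\mu_{\bf a}\in G$, so the displayed inequality applies with $\tilde\mu=\tilde\mu_{\bf a}$. Since
$$\int_H Q\,d\tilde\mu_{\bf a}=\frac{1}{s}\sum_{j=1}^s Q(a_j)=\int_H Q\,d\Bigl(\tfrac{1}{s}\textstyle\sum_{j=1}^s\delta(a_j)\Bigr),$$
substituting gives exactly
$$\Bigl|\int_H Q\,\bigl(d\mu-\tfrac{1}{s}\textstyle\sum_{j=1}^s\delta(a_j)\bigr)\Bigr|<\epsilon\qquad\text{for all }{\bf a}\in\tilde G_s(\mu),\ s=1,2,\dots,$$
which is the assertion.

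There is essentially no obstacle here: the corollary is a pure unwinding of definitions, and the only thing to check is the routine observation that $\tilde\mu_{\bf a}$ is a legitimate probability measure on $H$ lying in $G$, so that Lemma \ref{weakconv} is applicable to it. The substantive content sits in Lemma \ref{weakconv} itself, which encodes the weak-$*$ continuity of $\tilde\mu\mapsto\int_H Q\,d\tilde\mu$ for continuous $Q$; given that lemma, the corollary requires no new estimates and holds uniformly in $s$ precisely because the bound from Lemma \ref{weakconv} is uniform over all of $G$.
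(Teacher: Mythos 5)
Your proof is correct and follows exactly the route the paper intends: the corollary is an immediate consequence of Lemma \ref{weakconv} applied to the probability measure $\frac{1}{s}\sum_{j=1}^s\delta(a_j)$, which lies in $G$ by the very definition of $\tilde G_s(\mu)$. The paper treats this as self-evident and gives no separate proof, so your unwinding of the definitions matches its argument.
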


To prove Lemma \ref{weakconv}, we assume the conclusion is false; hence we get an $\epsilon >0$ and a sequence of measures $\mu_n$ in ${\mathcal M}(H)$ converging weak-* to $\mu$ with
$$|\int_H Q(d\mu -d\mu_n)|\geq  \epsilon \ \hbox{for all} \ n.$$
But this contradicts the weak-* convergence since $Q$ is continuous. 

Rewriting the conclusion of Corollary \ref{weakconvcor}, we have
$$-\epsilon \leq \int_H Q d\mu-\frac{1}{s}\sum_{j=1}^sQ(a_j)\leq \epsilon.$$
Thus for $w=e^{-Q}>0$,
\begin{equation}
\label{twelve}
e^{-sd\epsilon}\leq w(a_1)^d \cdots w(a_s)^d \bigl(e^{\int Qd\mu}\bigr)^{sd} \leq  e^{sd\epsilon}.\end{equation}
Now we use (\ref{twelve}) to prove the following relationship between the unweighted and weighted functionals, which immediately yields Corollary \ref{overcor} from  Theorem \ref{overthm}.

\begin{proposition} \label{easyprop} Let $\mu \in {\mathcal M}(H)$ and let $w=e^{-Q}>0$ be continuous on $H$. Then
$$J(\mu)=J^Q(\mu)\cdot e^{\int_H Qd\mu} \ \hbox{and} \  W(\mu)=W^Q(\mu)\cdot e^{\int_H Qd\mu}.$$
\end{proposition}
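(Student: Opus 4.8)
The plan is to establish the two identities at the level of the $d$-dependent quantities and then pass to the limit, since the factor $e^{\int_H Q d\mu}$ is constant in $d$ and $s$. The key observation is inequality \eqref{twelve}, which relates the weighted Vandermonde $VDM^Q_d({\bf a})$ to the unweighted $VDM_d({\bf a})$ uniformly over all ${\bf a}$ in a suitable neighborhood $\tilde G_s(\mu)$. First I would fix $\epsilon > 0$ and, using Corollary \ref{weakconvcor}, choose a neighborhood $G$ of $\mu$ so that for every ${\bf a} \in \tilde G_s(\mu)$ and every $s$, the chain \eqref{twelve} holds. Rewriting \eqref{twelve} and recalling the definition \eqref{wtdvdm}, namely $VDM^Q_d({\bf a}) = VDM_d({\bf a}) w(a_1)^d \cdots w(a_s)^d$, gives
\begin{equation}
e^{-sd\epsilon} \bigl(e^{-\int Q d\mu}\bigr)^{sd} \leq \frac{VDM^Q_d({\bf a})}{VDM_d({\bf a})} \leq e^{sd\epsilon}\bigl(e^{-\int Q d\mu}\bigr)^{sd}
\end{equation}
whenever $VDM_d({\bf a}) \neq 0$, so that on the neighborhood $\tilde G_s(\mu)$ the weighted and unweighted integrands are comparable up to the controllable factors $e^{\pm sd\epsilon}$ and $e^{-sd\int Q d\mu}$.

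Next I would feed these pointwise bounds into the two functionals separately. For $W$, the supremum is directly monotone, so taking the $\sup$ over ${\bf a} \in \tilde G_s(\mu)$ and extracting the $1/ds$ power yields
$$
e^{-\epsilon} e^{-\int Q d\mu} W_d(\mu,G) \leq W^Q_d(\mu,G) \leq e^{\epsilon} e^{-\int Q d\mu} W_d(\mu,G).
$$
For $J$, the same comparison applied inside the integral over $\tilde G_s(\mu)$, followed by the $1/2ds$ power, gives the analogous two-sided estimate
$$
e^{-\epsilon} e^{-\int Q d\mu} J_d(\mu,G) \leq J^Q_d(\mu,G) \leq e^{\epsilon} e^{-\int Q d\mu} J_d(\mu,G);
$$
here the integration introduces no extra factor because the comparison constant is uniform over the domain and the measure $\nu$ cancels in the ratio.

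Finally I would take $\limsup_{d\to\infty}$ and then $\inf_{G\ni\mu}$. Because $e^{\pm\epsilon}e^{-\int Q d\mu}$ is independent of $d$, the $\limsup$ passes through cleanly to give
$$
e^{-\epsilon} e^{-\int Q d\mu} J(\mu,G) \leq J^Q(\mu,G) \leq e^{\epsilon} e^{-\int Q d\mu} J(\mu,G),
$$
and similarly for $W$. Taking the infimum over neighborhoods $G$ (noting that the particular $G$ supplied by Corollary \ref{weakconvcor} is a valid competitor, while shrinking $G$ only decreases both sides in tandem) and then letting $\epsilon \to 0$ forces $J^Q(\mu) = e^{-\int_H Q d\mu} J(\mu)$ and $W^Q(\mu) = e^{-\int_H Q d\mu} W(\mu)$, which rearrange to the stated identities.

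\emph{Main obstacle.} The calculations themselves are routine once \eqref{twelve} is in hand; the one point requiring care is the interaction of the $\inf_{G\ni\mu}$ with the $\epsilon$-dependent neighborhood. I would argue that for each fixed $\epsilon$ the neighborhood $G = G(\epsilon)$ from Corollary \ref{weakconvcor} gives a valid two-sided bound, and that since shrinking $G$ can only shrink $\tilde G_s(\mu)$—hence decrease both $J_d,W_d$ and $J^Q_d,W^Q_d$ monotonically—the infima on the two sides are linked by the same constants $e^{\pm\epsilon}$. Letting $\epsilon\to 0$ then removes the discrepancy. The subtlety is purely in bookkeeping the order of the operations $\limsup_d$, $\inf_G$, and $\epsilon\to 0$ so that the constant factor $e^{-\int Q d\mu}$ survives exactly while the $e^{\pm\epsilon}$ error disappears.
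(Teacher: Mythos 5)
Your proposal is correct and follows essentially the same route as the paper: both arguments rewrite (\ref{twelve}) into a two-sided pointwise comparison of $|VDM^Q_d|$ and $|VDM_d|$ on $\tilde G_s(\mu)$, pass it through the supremum (for $W$) and the integral (for $J$), extract roots to get the paper's inequality (\ref{jclone}), and then take $\limsup_d$, $\inf_G$, and $\epsilon\to 0$ in that order. Your extra remark about the monotonicity of the functionals under shrinking $G$ is exactly the bookkeeping the paper compresses into the phrase ``these inequalities are valid for $G$ a sufficiently small neighborhood of $\mu$.''
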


\begin{proof} Using (\ref{twelve}) and (\ref{wtdvdm}), given $\epsilon >0$, for ${\bf a}\in \tilde G_s(\mu)$, 
$$|VDM_d({\bf a})|e^{ds(-\epsilon-\int_H Qd\mu)} \leq |VDM^Q_d({\bf a})| \leq |VDM_d({\bf a})|e^{ds(\epsilon-\int_H Qd\mu)}.$$
Hence
$$|VDM^Q_d({\bf a})|e^{ds(-\epsilon+\int_H Qd\mu)} \leq |VDM_d({\bf a})| \leq |VDM^Q_d({\bf a})|e^{ds(\epsilon+\int_H Qd\mu)}.$$
Now we take the supremum over ${\bf a}\in \tilde G_s(\mu)$ and take $ds-$th roots of each side to get
\begin{equation}\label{jclone}W^Q_d(\mu,G)e^{-\epsilon}e^{\int_H Q d\mu} \leq W_d(\mu,G) \leq W^Q_d(\mu,G)e^{\epsilon}e^{\int_H Q d\mu}.\end{equation}
Precisely, given $\epsilon >0$, these inequalities are valid for $G$ a sufficiently small neighborhood of $\mu$. Hence we get, upon taking $\limsup_{d\to \infty}$, the infimum over $G\ni \mu$, and noting that $\epsilon >0$ is arbitrary,
$$W(\mu)=W^Q(\mu)\cdot e^{\int_H Qd\mu}$$
as desired. A similar proof shows that $J(\mu)=J^Q(\mu)\cdot e^{\int_H Qd\mu}$.
\end{proof}

We can now give a useful upper bound on $J(\mu)$ and $W(\mu)$.

\begin{proposition} \label{upper} For $\mu \in {\mathcal M}(H)$ we have 
$$\log J(\mu)\leq \log W(\mu)\leq \inf_w [\log \bar \delta^w (H) +\int_H Qd\mu]$$
where the infimum is taken over all continuous weights $w=e^{-Q}>0$ on $H$.
\end{proposition}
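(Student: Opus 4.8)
The plan is to combine the two results already established earlier in this section, namely Lemma~\ref{easylemma} and Proposition~\ref{easyprop}. Lemma~\ref{easylemma} gives, for any admissible weight $w=e^{-Q}$, the chain of inequalities $J^Q(\mu)\leq W^Q(\mu)\leq \bar\delta^w(H)$, which upon taking logarithms reads
$$\log J^Q(\mu)\leq \log W^Q(\mu)\leq \log\bar\delta^w(H).$$
Proposition~\ref{easyprop} tells us that, for continuous $w=e^{-Q}>0$, the weighted and unweighted functionals differ by the multiplicative factor $e^{\int_H Q\,d\mu}$; that is, $\log J(\mu)=\log J^Q(\mu)+\int_H Q\,d\mu$ and $\log W(\mu)=\log W^Q(\mu)+\int_H Q\,d\mu$.

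First I would fix an arbitrary continuous weight $w=e^{-Q}>0$ on $H$. Adding $\int_H Q\,d\mu$ to each term of the logged Lemma~\ref{easylemma} chain and substituting the identities from Proposition~\ref{easyprop} gives
$$\log J(\mu)\leq \log W(\mu)\leq \log\bar\delta^w(H)+\int_H Q\,d\mu.$$
The first inequality $\log J(\mu)\leq\log W(\mu)$ is in fact weight-independent and holds immediately from $J(\mu)\le W(\mu)$ (which itself follows at the discrete level $J_d\le W_d$ as in the proof of Lemma~\ref{easylemma}), so it is not the issue. Since the displayed bound on $\log W(\mu)$ holds for every continuous $w=e^{-Q}>0$, I would then take the infimum over all such weights on the right-hand side, yielding exactly
$$\log J(\mu)\leq \log W(\mu)\leq \inf_w\Bigl[\log\bar\delta^w(H)+\int_H Q\,d\mu\Bigr],$$
which is the claimed bound.

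There is essentially no hard obstacle here: this proposition is a bookkeeping consequence of the two preceding results, and the only point requiring a moment's care is compatibility of hypotheses. Proposition~\ref{easyprop} is stated for continuous $w=e^{-Q}>0$, while Lemma~\ref{easylemma} is stated for merely admissible weights; since the infimum in the conclusion ranges only over continuous strictly positive weights, this mismatch causes no trouble --- for each such $w$ both lemmas apply simultaneously. I expect the write-up to be just a few lines, essentially the two substitutions followed by taking the infimum, with the remark that $\epsilon$-type arguments are unnecessary because the genuine work was already absorbed into Proposition~\ref{easyprop}.
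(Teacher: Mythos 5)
Your proposal is correct and is essentially identical to the paper's proof, which likewise substitutes the identities of Proposition~\ref{easyprop} into the chain $J^Q(\mu)\leq W^Q(\mu)\leq \bar\delta^w(H)$ from Lemma~\ref{easylemma} and then takes the infimum over continuous weights $w=e^{-Q}>0$. No gaps.
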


\begin{proof} Using $W(\mu)=W^Q(\mu)\cdot e^{\int_H Qd\mu}$ for any continuous weight $w=e^{-Q}>0$ on $H$ from Proposition \ref{easyprop}, Lemma \ref{easylemma} gives
$$\log J(\mu)\leq \log W(\mu)=\log W^Q(\mu)+ \int_H Qd\mu \leq \log \bar \delta^w(H) + \int_H Qd\mu.$$
\end{proof}

\section{Proof of Theorem \ref{overthm} if $J(\mu)=W(\mu)>0$.}\label{sec:lbjmu} 

In this section we prove Theorem \ref{overthm} in the case when $\mu \in {\mathcal M}(H)$ is such that there exists $u\in L(\CC^n)$ satisfying (\ref{nonpp}) with $(dd^cu)^n=\mu$ and $\int_H ud\mu > -\infty$. In Section \ref{sec:app}  we will see that  this happens precisely when $J(\mu)=W(\mu)>0$.

\begin{proof} The proof proceeds in several steps.
\vskip6pt

\noindent {\bf Step 1}: {\sl We prove the lower bound
$$\log J(\mu)\geq \inf_w [\log \bar \delta^w (H) +\int_H Qd\mu] >-\infty$$
where the infimum is taken over all  $w=e^{-Q}>0$ continuous on $H$ in the case $\mu = \mu_{eq}(H,Q):= (dd^cV^*_{H,Q})^n$ for some polynomial  weight $w=e^{-Q}>0$ on $H$; i.e., $w$ is a {\it real} polynomial in $\RR^{2n}$ with $w> 0$ on $H$. Hence (\ref{ell}) holds in this case.}
\vskip6pt

We begin with some preliminaries. First, since $H$ is assumed to be convex, considering $H$ as a subset of $\RR^{2n}$, $H$ satisfies a Markov inequality of exponent two for real polynomials: there exists $M=M(H)>0$ with
$$|\nabla p(x)| \leq M (\hbox{deg}p)^2 ||p||_H \ \hbox{for all} \ x\in H$$
for all real polynomials on $\RR^{2n}$ (cf., \cite{W}). Thus for any two points $x,y\in H$, integrating this inequality along the line segment in $H$ joining the points, we have
\begin{equation}\label{conxmark}|p(x)-p(y)|\leq M(\hbox{deg}p)^2 ||p||_H|x-y|.\end{equation}

\begin{lemma}\label{pospoly} Let $\Lambda_d({\bf a})=\Lambda_d(a_1,...,a_s)$ be a nonnegative polynomial on $H^s$, where $s =s(d):= {d+n \choose d}$, such that for each $j=1,...,s$, the polynomial 
$$t\to \Lambda_d(a_1,...,a_{j-1},t,a_{j+1},...,a_s)$$
is a polynomial of degree at most $\alpha d^{\beta}$ for positive constants $\alpha, \beta$ (independent of $d$). If ${\bf a}^*=(a_1^*,...,a_s^*)\in H^s$ satisfies 
$$\Lambda_d({\bf a}^*) = \max_{{\bf a}\in H^s}\Lambda_d({\bf a}),$$
then for all ${\bf a}\in \Delta_d({\bf a}^*):=\{{\bf a}\in H^s: |a_j -a_j^*| \leq e^{-\sqrt d}, \ j=1,...,s\}$,
$$\Lambda_d({\bf a}) \geq \Lambda_d({\bf a}^*)\bigl(1-\alpha' d^{\beta'}e^{-\sqrt d}\bigr)$$
for positive constants $\alpha', \beta'$ (independent of $d$).
\end{lemma}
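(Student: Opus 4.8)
The plan is to pass from the maximizer ${\bf a}^*$ to a nearby point ${\bf a}\in\Delta_d({\bf a}^*)$ one coordinate at a time, controlling the change at each step with the single-variable Markov estimate (\ref{conxmark}) that convexity of $H$ provides. First I would introduce the hybrid points
$${\bf b}^{(k)}:=(a_1,\dots,a_k,a_{k+1}^*,\dots,a_s^*),\qquad k=0,1,\dots,s,$$
so that ${\bf b}^{(0)}={\bf a}^*$ and ${\bf b}^{(s)}={\bf a}$, and record the telescoping identity
$$\Lambda_d({\bf a}^*)-\Lambda_d({\bf a})=\sum_{k=1}^s\left[\Lambda_d({\bf b}^{(k-1)})-\Lambda_d({\bf b}^{(k)})\right].$$
The consecutive points ${\bf b}^{(k-1)}$ and ${\bf b}^{(k)}$ differ only in the $k$-th slot, where the former carries $a_k^*$ and the latter $a_k$, so each summand is a difference of values of a single one-variable polynomial.

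Next, for each $k$ I would set $p_k(t):=\Lambda_d(a_1,\dots,a_{k-1},t,a_{k+1}^*,\dots,a_s^*)$, which by hypothesis is a real polynomial on $\RR^{2n}$ of degree at most $\alpha d^\beta$. Applying (\ref{conxmark}) to $p_k$ with $x=a_k^*$, $y=a_k$, and using the defining bound $|a_k^*-a_k|\le e^{-\sqrt d}$ of $\Delta_d({\bf a}^*)$, gives
$$\left|\Lambda_d({\bf b}^{(k-1)})-\Lambda_d({\bf b}^{(k)})\right|=|p_k(a_k^*)-p_k(a_k)|\le M(\alpha d^\beta)^2\,\|p_k\|_H\,e^{-\sqrt d}.$$

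The one point requiring care is the uniform bound $\|p_k\|_H\le\Lambda_d({\bf a}^*)$, and this is exactly where the two standing hypotheses on $\Lambda_d$ enter. For any $t\in H$ the point $(a_1,\dots,a_{k-1},t,a_{k+1}^*,\dots,a_s^*)$ again lies in $H^s$, so nonnegativity of $\Lambda_d$ together with global maximality of ${\bf a}^*$ yields $0\le p_k(t)\le\Lambda_d({\bf a}^*)$; thus the raw sup-norm produced by Markov's inequality is converted into the controllable quantity $\Lambda_d({\bf a}^*)$. Substituting this, summing over $k=1,\dots,s$, and using that $s=\binom{d+n}{d}\le C(n)\,d^n$ is polynomial in $d$, I obtain
$$\Lambda_d({\bf a}^*)-\Lambda_d({\bf a})\le sM\alpha^2 d^{2\beta}\,\Lambda_d({\bf a}^*)\,e^{-\sqrt d}\le\alpha' d^{\beta'}\,\Lambda_d({\bf a}^*)\,e^{-\sqrt d}$$
with $\alpha':=C(n)M\alpha^2$ and $\beta':=n+2\beta$, both independent of $d$; rearranging gives precisely the asserted inequality $\Lambda_d({\bf a})\ge\Lambda_d({\bf a}^*)(1-\alpha' d^{\beta'}e^{-\sqrt d})$.

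I do not expect a genuine obstacle: once the telescoping is arranged, the argument is elementary. The only substantive inputs are the convexity of $H$, which through \cite{W} licenses the mean-value form (\ref{conxmark}) of Markov's inequality, and the combination of nonnegativity with global maximality, which tames the sup-norm factor. The polynomial blow-up $d^{\beta'}$ is harmless since it is ultimately absorbed by the exponentially small factor $e^{-\sqrt d}$.
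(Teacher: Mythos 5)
Your proof is correct and is essentially the same as the paper's: the same telescoping decomposition through points differing in one coordinate, the same application of the integrated Markov inequality (\ref{conxmark}) to each single-variable slice, and the same use of nonnegativity plus global maximality to bound $\|p_k\|_H$ by $\Lambda_d({\bf a}^*)$. Your explicit remark that $s\leq C(n)d^n$ gets absorbed into $\alpha' d^{\beta'}$ is the only detail the paper leaves implicit.
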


\begin{proof} One writes, for ${\bf a}\in \Delta_d({\bf a}^*)$, 
$$\Lambda_d({\bf a}^*) - \Lambda_d({\bf a})=$$
$$\sum_{j=1}^s\bigl(  \Lambda_d(a_1^*,...,a_j^*,a_{j+1},...,a_s) -\Lambda_d(a_1^*,...,a_{j-1}^*,a_{j},...,a_s) \bigr)$$
and applies (\ref{conxmark}). Thus 
$$|\Lambda_d({\bf a}^*) - \Lambda_d({\bf a})|\leq sM\Lambda_d({\bf a}^*) [\alpha d^{\beta}]^2e^{-\sqrt d}$$
so that
$$\Lambda_d({\bf a})\geq \Lambda_d({\bf a}^*)\bigl( 1-  sM [\alpha d^{\beta}]^2e^{-\sqrt d} \bigr).$$
\end{proof}

\begin{remark} The lemma is valid more generally if $s\leq a d^b$ for positive constants $a, b$ (independent of $d$).
\end{remark}

We proceed with the proof of Step 1. Thus we let $w=e^{-Q}>0$ be a polynomial on $H$ of degree $k$ and we observe that for each $d=1,2,...$,
$${\bf a}\to |VDM_d^Q({\bf a})|^2$$
is a nonnegative polynomial on $H^s$ of degree at most $2kds +ds \leq \alpha d^{\beta}$. Choose ${\bf a}^{*,d} \in H^s$ to be a {\it weighted Fekete set of order $d$ for $H,Q$} (i.e., maximizing $ |VDM_d^Q({\bf a})|$ and hence $ |VDM_d^Q({\bf a})|^2$ over all $s-$tuples ${\bf a}$ in $H^s$). By \cite{[BBN]}, 
$$\frac{1}{s} \sum_{j=1}^s \delta(a_j^{*,d})\to  \mu_{eq}(H,Q) \ \hbox{weak-}*.$$
In particular, given a neighborhood $G$ of $\mu_{eq}(H,Q)$, for $d$ sufficiently large, $\frac{1}{s} \sum_{j=1}^s \delta(a_j^{*,d})\in G$. Moreover, by Proposition 1.3 of \cite{bloomvoic}, for $d$ sufficiently large, $\Delta_d({\bf a}^{*,d})\subset  G$. Thus, for such $d$,
$$J_d^Q(\mu_{eq}(H,Q),G)^{2ds}=\int_{\tilde G_s(\mu_{eq}(H,Q))} |VDM_d^Q({\bf a})|^2d\nu({\bf a}) $$
$$\geq \int_{\Delta_d({\bf a}^{*,d})} |VDM_d^Q({\bf a})|^2d\nu({\bf a}).$$
From Lemma \ref{pospoly} and (\ref{massdens}) this last integral satisfies
$$\int_{\Delta_d({\bf a}^{*,d})} |VDM_d^Q({\bf a})|^2d\nu({\bf a})\geq |VDM_d^Q({\bf a}^{*,d})|^2\cdot e^{-\sqrt d Ts}(1-\alpha' d^{\beta'}e^{-\sqrt d}).$$
We conclude that
$$\log J_d^Q(\mu_{eq}(H,Q),G)$$
$$\geq  \frac{1}{ds} \log |VDM_d^Q({\bf a}^{*,d})| -\frac{\sqrt d Ts}{2ds}+ \frac{1}{2ds}\log (1-\alpha' d^{\beta'}e^{-\sqrt d}).$$
Hence
$$\liminf_{d\to \infty} \log J_d^Q(\mu_{eq}(H,Q),G)\geq \log \bar \delta^w (H)$$
so that
\begin{equation}\label{ineq}\inf_{G \ni \mu_{eq}(H,Q)} \bigl(\liminf_{d\to \infty} \log J_d^Q(\mu_{eq}(H,Q),G)\bigr)\geq \log \bar \delta^w (H).\end{equation}
Using the version of (\ref{jclone}) for $J_d,J_d^Q$, we have, given $\epsilon>0$, for $G$ a sufficiently small neighborhood of $\mu_{eq}(H,Q)$,
$$J^Q_d(\mu_{eq}(H,Q),G)e^{-\epsilon}e^{\int_H Q d\mu_{eq}(H,Q)} \leq J_d(\mu_{eq}(H,Q),G). $$
Taking logarithms we conclude that 
$$\log J_d(\mu_{eq}(H,Q),G)\geq \log J^Q_d(\mu_{eq}(H,Q),G)-\epsilon+\int_H Q d\mu_{eq}(H,Q).$$
Hence, given $\epsilon>0$ and for $G$ a sufficiently small neighborhood of $\mu_{eq}(H,Q)$,
$$\liminf_{d\to \infty} \log J_d(\mu_{eq}(H,Q),G)$$
$$\geq \liminf_{d\to \infty} \log J^Q_d(\mu_{eq}(H,Q),G)-\epsilon+\int_H Q d\mu_{eq}(H,Q).$$
Since $\epsilon>0$ is arbitrary, from (\ref{ineq}) we have
$$\inf_G \bigl(\liminf_{d\to \infty}\log J_d(\mu_{eq}(H,Q),G)\bigr)\geq \log \bar \delta^w (H)+\int_H Q d\mu_{eq}(H,Q).$$
The right-hand-side is a candidate for the infimum in $\inf_{\tilde w} [\log \bar \delta^{\tilde w} (H) +\int_H \tilde Q\mu_{eq}(H,Q)]$; thus we have proved the lower bound in Step 1. Together with the upper bound in Proposition \ref{upper} we have shown that (\ref{ell}) holds in this case and we can replace ``limsup'' by limit in the definitions of $J,W$; i.e.,
$$ \inf_{G\ni \mu_{eq}(H,Q)} \bigl(\lim_{d\to \infty}\log J_d(\mu_{eq}(H,Q),G)\bigr)=\inf_{\tilde w} [\log \bar \delta^{\tilde w} (H) +\int_H \tilde Qd\mu_{eq}(H,Q)]$$
\begin{equation}\label{longeq}=\log \bar \delta^w (H)+\int_H Q d\mu_{eq}(H,Q)=\log J(\mu_{eq}(H,Q))>-\infty. \end{equation}
\vskip6pt

\noindent {\bf Step 2}: {\sl We prove the lower bound
$$\log J(\mu)\geq \inf_w [\log \bar \delta^w (H) +\int_H Qd\mu] >-\infty$$
where the infimum is taken over all  $w=e^{-Q}>0$ continuous on $H$ in the case $\mu = \mu_{eq}(H,Q):= (dd^cV^*_{H,Q})^n$ for some continuous weight $w=e^{-Q}>0$ on $H$. Hence (\ref{ell}) holds in this case.}
\vskip6pt

We take $\mu:=\mu_{eq}(H,Q)$ where $w=e^{-Q}>0$ is continuous. Let $\{w_n\}$ be a sequence of polynomials in $\RR^{2n}$ which converge uniformly to $w$ on $H$. Since $w> 0$ on $H$, we can take $w_n> 0$ on $H$ for $n$ sufficiently large. It follows that $V_{H,Q_n}\to V_{H,Q}$ uniformly on $\CC^n$ so that, in particular, 
$$\mu_n:=\mu_{eq}(H,Q_n)\to \mu \ \hbox{weak-}*.$$
By the uniform convergence of $Q_n$ to $Q$,
$$\lim_{n\to \infty} \int_H Q_n d\mu_n = \int_H Q d\mu.$$
We also have
$$\lim_{n\to \infty} \log \bar \delta^{w_n} (H) = \log \bar \delta^w (H).$$
Using the uppersemicontinuity in Lemma \ref{usc}, 
$$\inf_{G\ni \mu} \bigl(\liminf_{d\to \infty}\log J_d(\mu,G)\bigr)\geq \limsup_{n\to \infty} \inf_{G_n\ni \mu_n} \bigl(\liminf_{d\to \infty}\log J_d(\mu_n,G_n)\bigr).$$
By Step 1, the right-hand-side equals
$$\limsup_{n\to \infty} [\log \bar \delta^{w_n} (H)+\int_H Q_n d\mu_n]$$
which equals
$$\log \bar \delta^w (H)+\int_H Q d\mu >-\infty$$
from the above observations. This proves the lower bound. Together with the upper bound in Proposition \ref{upper} 
we have again shown that (\ref{ell}) holds in this case and we can replace ``limsup'' by limit (\ref{longeq}).

\vskip6pt

\noindent {\bf Step 3}: {\sl We prove the theorem if $\mu \in {\mathcal M}(H)$ is such that there exists $u\in L(\CC^n)$ satisfying (\ref{nonpp}) with $(dd^cu)^n=\mu$ and $\int_H ud\mu > -\infty$.}
\vskip6pt

We begin with the following.

\begin{proposition} \label{goodapprox} There exist continuous $u_j\in L^+(\CC^n)$ with $u_j \downarrow  u$ on $\CC^n$ and 
$\mu_j:=(dd^cu_j)^n$ supported in $H$. In particular, $\mu_j \to \mu =(dd^cu)^n$ weak-*. Moreover,
\begin{equation}\label{weak*}\lim_{j\to \infty} \int_{H} u_jd\mu_j = \lim_{j\to \infty} \int_{H} u_jd\mu=\int_H ud\mu>-\infty .\end{equation}
\end{proposition}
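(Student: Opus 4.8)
The plan is to construct the $u_j$ by first smoothing $u$ from above and then projecting onto $H$. Viewing $u\in L(\CC^n)$ as an $\omega$-plurisubharmonic function on $\PP^n$ (with $\omega$ the Fubini--Study form), standard regularization produces continuous (indeed smooth, hence bounded) functions $v_j\in L^+(\CC^n)$ with $v_j\downarrow u$ on $\CC^n$. These $v_j$ need not carry Monge--Amp\`ere mass on $H$, so I would set
$$u_j:=V^*_{H,Q_j},\qquad Q_j:=v_j|_H,$$
the weighted extremal function for the continuous weight $w_j=e^{-Q_j}>0$. Since $H$ is convex, hence locally regular, and $Q_j$ is continuous, $u_j$ is continuous, $u_j\in L^+(\CC^n)$, and $\mu_j:=(dd^cu_j)^n=\mu_{eq}(H,Q_j)$ is supported in $H$. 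Because $u\le v_j$ on $\CC^n$, the function $u$ competes in the envelope defining $u_j$, so $u\le u_j$; and as the weights $Q_j$ decrease, so do the $u_j$. Thus $\tilde u:=\lim_j u_j\ge u$ exists in $L(\CC^n)$. By (\ref{suppw}), $u_j\le Q_j=v_j$ on $H$, whence $\tilde u\le u$ on $H\supset\operatorname{supp}\mu$; since $\mu=(dd^cu)^n$ and so $\tilde u\le u$ holds $\mu$-a.e., the domination principle (Proposition \ref{domprin}) forces $\tilde u\le u$ everywhere, giving $u_j\downarrow u$. Weak-* convergence $\mu_j\to\mu$ is then immediate from the continuity of the Monge--Amp\`ere operator along decreasing sequences of locally bounded plurisubharmonic functions \cite{bedtay}.

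It remains to prove (\ref{weak*}). The identity $\lim_j\int_H u_j\,d\mu=\int_H u\,d\mu$ is monotone convergence: $u_j\downarrow u$, each $u_j$ is bounded above by the continuous $u_1$, and the limit $\int_H u\,d\mu>-\infty$ is finite by hypothesis. For $\int_H u_j\,d\mu_j$ I would squeeze from both sides. For the upper bound, fix $k$; for $j\ge k$ we have $u_j\le u_k$, so $\int_H u_j\,d\mu_j\le\int_H u_k\,d\mu_j$, and since $u_k$ is continuous and $\mu_j\to\mu$,
$$\limsup_{j\to\infty}\int_H u_j\,d\mu_j\le\int_H u_k\,d\mu;$$
letting $k\to\infty$ and using monotone convergence gives $\limsup_j\int_H u_j\,d\mu_j\le\int_H u\,d\mu$.

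For the matching lower bound I would integrate by parts. Writing
$$\mu_j-\mu=(dd^cu_j)^n-(dd^cu)^n=dd^c(u_j-u)\wedge S_j,\qquad S_j:=\sum_{\ell=0}^{n-1}(dd^cu_j)^\ell\wedge(dd^cu)^{n-1-\ell},$$
with $S_j\ge 0$ closed, integration by parts yields
$$\int_H u_j\,d\mu_j-\int_H u_j\,d\mu=\int u_j\,dd^c(u_j-u)\wedge S_j=\int (u_j-u)\,dd^cu_j\wedge S_j\ge 0,$$
because $u_j-u\ge 0$ and $dd^cu_j\wedge S_j\ge 0$. Hence $\int_H u_j\,d\mu_j\ge\int_H u_j\,d\mu$, and taking $\liminf$ gives $\liminf_j\int_H u_j\,d\mu_j\ge\int_H u\,d\mu$. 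Combining the two bounds proves (\ref{weak*}).

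I expect the main obstacles to be the two structural inputs. First, pinning down $u_j\downarrow u$ requires both that the smoothing $v_j\downarrow u$ exist inside $L^+(\CC^n)$ and the clean application of the domination principle to rule out $\lim_j u_j$ strictly exceeding $u$ off $H$; here the full-mass condition (\ref{nonpp}) and the finiteness $\int_H u\,d\mu>-\infty$ are what keep the masses equal and everything integrable, so that Proposition \ref{domprin} applies. Second, the integration by parts must be justified on the noncompact $\CC^n$; I would carry it out on $\PP^n$, where $u_j-u$ is a bounded difference of $\omega$-plurisubharmonic functions, so no boundary terms arise and the Bedford--Taylor calculus applies directly.
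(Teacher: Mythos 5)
Your construction of the $u_j$ is exactly the paper's: regularize from above by continuous $v_j\in L^+(\CC^n)$ with $v_j\downarrow u$, set $u_j=V_{H,v_j|_H}$, and use the envelope property plus the domination principle (Proposition \ref{domprin}/Corollary \ref{domcorr}) to force $\lim_j u_j=u$ everywhere; that part matches the paper step for step. Where you diverge is (\ref{weak*}): the paper simply invokes Theorem 2.1 of \cite{[CGZ]} (together with the fact that each $\mu_j$ is supported in $H$), whereas you give a hands-on squeeze --- monotone convergence for $\int_H u_j\,d\mu$, weak-* convergence against the continuous $u_k$ for the upper bound on $\int_H u_j\,d\mu_j$, and a telescoping integration by parts for the lower bound. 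The structure of that argument is sound and is essentially how the cited convergence results are proved.

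The gap is in your justification of the two analytic inputs. You assert that $u_j-u$ is a \emph{bounded} difference of $\omega$-psh functions and that the Bedford--Taylor calculus for locally bounded psh functions applies directly; neither is true here. The hypothesis on $u$ is only that $u\in L(\CC^n)$ satisfies (\ref{nonpp}) with $\int_H u\,d\mu>-\infty$, i.e.\ the associated $\phi$ lies in $\mathcal E^1(X,\omega)$, and such $\phi$ need not be bounded below, so $u_j-u$ is in general unbounded above and the mixed products $(dd^cu_j)^\ell\wedge(dd^cu)^{n-1-\ell}$ are nonpluripolar products of an unbounded function. For the same reason your citation of \cite{bedtay} for ``decreasing sequences of locally bounded psh functions'' does not cover the convergence $\mu_j\to\mu$: the limit $u$ is not locally bounded. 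Both steps are still correct, but they require the finite-energy machinery --- continuity of the nonpluripolar Monge--Amp\`ere operator along decreasing sequences in $\mathcal E(X,\omega)$ and integration by parts in $\mathcal E^1$ (\cite{[GZ2]}, \cite{[CGZ]}, \cite{[BEGZ]}) --- which is precisely what the paper's appeal to Theorem 2.1 of \cite{[CGZ]} packages. With those references substituted for the locally bounded theory, your argument goes through.
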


\begin{proof} We know there exist $v_j\in L^+(\CC^n)$ continuous (even smooth) with $v_j \downarrow u$. Define the weighted pluricomplex Green functions
$$u_j =V_{H,v_j|_{H}}:= \sup \{ U\in L(\CC^n): U \leq v_j \ \hbox{on} \  H\}.$$
Since $\{v_j\}$ are monotone, so are $\{u_j\}$ and clearly $u_j \geq v_j$. Moreover, since $H$ is locally regular and $v_j|_{H}$ is continuous, $u_j$ is continuous and $u_j \leq v_j$ on $H$. But each  $v_j$ is a competitor in the definition of $u_j =V_{H,v_j|_{H}}$ so that $v_j \leq u_j$ on $\CC^n$ and we have $u_j = v_j$ on $H$. Thus $\tilde u:= \lim_{j\to \infty} u_j  \geq u$ everywhere and $\tilde u = u$ on $H$. From the domination principle (Corollary \ref{domcorr} to be proved in Section \ref{sec:app}), $\tilde u = u$ on $\CC^n$. Relation (\ref{weak*}) follows from Theorem 2.1 of \cite{[CGZ]} and the fact that each measure $\mu_j$ is supported in $H$.
\end{proof}

\begin{remark} \label{decapprox} If $u\in L(\CC^n)$ and $(dd^cu)^n$ is not supported in $H$, the proof of Proposition \ref{goodapprox} yields the existence of continuous $u_j\in L^+(\CC^n)$ satisfying $u_j \downarrow \tilde u \geq u$ with $\tilde u =u$ on $H$ and $\mu_j:=(dd^cu_j)$ supported in $H$. This will be used in Section \ref{sec:app}.
\end{remark}

Now since $\mu_j$ is supported in $H$ and $u_j =V_{H,v_j|_{H}}$ we have, from Step 2 with $w_j=e^{-Q_j}$ where $Q_j=v_j|_{H}=u_j|_{H}$, that
$$\log W(\mu_j)=\log J(\mu_j) =\log \bar \delta^{w_j} (H)+ \int_{H} u_j d\mu_j.$$
Since $w_j \uparrow w$ on $H$ where $w=e^{-Q}$ with $Q=u|_{H}$, the limit 
$$\lim_{j\to \infty}\log \bar \delta^{w_j} (H)$$
exists and is finite; hence by (\ref{weak*}) 
$$\lim_{j\to \infty}\log W(\mu_j)=\lim_{j\to \infty}\log J(\mu_j)=\lim_{j\to \infty}[\log \bar \delta^{w_j} (H)+\int_{H} u_jd\mu_j] >-\infty.$$
For simplicity in notation, we work with the functional $W$. The uppersemicontinuity of the functional $W$ (Lemma \ref{usc}) gives 
$$\log W(\mu) \geq \lim_{j\to \infty}[\log \bar \delta^{w_j} (H)+ \int_{H} u_j d\mu_j].$$
In particular, given $\epsilon >0$, for $j>j_0(\epsilon)$ we have, from this and from (\ref{weak*}), 
\begin{equation} \label{fourfive}\log W(\mu) \geq \log W(\mu_j) -\epsilon =\log \bar \delta^{w_j} (H)+ \int_{H} u_j d\mu_j-\epsilon \end{equation}
$$\geq \log \bar \delta^{w_j} (H)+ \int_{H} u_j d\mu- 2\epsilon.$$ 
Since $w_j=e^{-Q_j}$ where $Q_j=u_j|_{H}$ is a continuous weight on $H$, 
$$\log \bar \delta^{w_j} (H)+ \int_{H} u_j d\mu \geq \inf_{\tilde w} [\log \bar \delta^{\tilde w} (H) +\int_{H} \tilde Qd\mu].$$
From the upper bound for $W(\mu)$ in Proposition \ref{upper},
$$\log W(\mu)\leq \inf_{\tilde w} [\log \bar \delta^{\tilde w} (H) +\int_{H} \tilde Qd\mu]\leq \log W(\mu_j) +\epsilon \leq \log W(\mu) +2\epsilon$$
and hence
$$ \log W(\mu)= \inf_{\tilde w} [\log \bar \delta^{\tilde w} (H) +\int_{H} \tilde Qd\mu]. $$
The same arguments apply using the functional $J$; hence we conclude that
\begin{equation} \label{wd} \log W(\mu)=\log J(\mu)= \inf_{\tilde w} [\log \bar \delta^{\tilde w} (H) +\int_{H} \tilde Qd\mu]. \end{equation}
\end{proof}

\begin{remark} \label{independ} In particular, since $J(\mu)=W(\mu)$, $J(\mu)$ is independent of the measure $\nu$, provided (\ref{massdens}) holds. Another observation is that the convexity of $H$ was used in Step 1 in order to have a line segment joining a pair of points in $H$ along which a Markov inequality could be integrated (equation (\ref{conxmark})) and in Proposition \ref{goodapprox} to conclude that $H$ is locally regular. If $H$ is a fat ($H=\overline{H^o}$) connected, subanalytic set, a similar proof holds using the facts that such an $H$ is locally regular \cite{Pl} and satisfies a Markov inequality with perhaps a different exponent and that pairs of points $x,y$ in $H$ can be joined by rectifiable arcs whose length is dominated, up to a universal constant, by a fixed power of the Euclidean distance from $x$ to $y$ since $H$ is Whitney $p-$regular for some $p$ (\cite{bm}, Definition 6.9 and Theorem 6.10).
\end{remark}

\section{Relation with $E^*(\mu)$ and the case $J(\mu)=W(\mu)=0$.}\label{sec:app}

We give a brief synopsis of a special case of \cite{[BBGZ]} and reconcile it with our results. A good discussion of the material can be found in \cite{[GZ]}, \cite{[GZ2]}, and \cite{[Ber]}. Let $X=\PP^n$ with the usual K\"ahler form $\omega$ normalized so that $\int_{\PP^n} \omega^n =1$. Define the class of {\it $\omega-$psh functions}
$$PSH(X,\omega) :=\{\phi \in L^1(X): \phi \ \hbox{usc}, \ dd^c\phi +\omega \geq 0\}.$$
For any $\phi \in PSH(X,\omega)$ one can define the {\it nonpluripolar Monge-Amp\`ere measure} as 
 $$MA(\phi):=\lim_{j\to \infty} \bigl( {\bf 1}_{\{\phi>-j\}}\cdot (dd^c\max[\phi, -j] +\omega)^n\bigr).$$

 \begin{definition} We write
 $$\mathcal E(X,\omega):=\{\phi \in PSH(X,\omega): \int_X MA(\phi)=1\}.$$
 \end{definition} 
 \noindent It is known that $\mathcal E(X,\omega)$ is convex (\cite{[GZ2]}, Proposition 1.6). For $\phi \in \mathcal E(X,\omega)$, we write $MA(\phi)= (dd^c \phi +\omega)^n$.

For bounded $\phi \in PSH(X,\omega)$, define
\begin{equation} \label{energeqn}E(\phi):=\frac{1}{n+1}\sum_{j=0}^n\int_X \phi (dd^c\phi +\omega)^j\wedge \omega^{n-j}\end{equation}
and extend $E$ to $PSH(X,\omega)$ via
 $$E(\psi)=\lim_{j\to \infty} E(\max[\psi, -j]).$$
This operator is {\it monotone} (cf., \cite{[BB]}, \cite{BBnew}): 
\begin{equation} \label{emono} u_1 \geq u_2 \ \hbox{implies} \ E(u_1) \geq E(u_2).\end{equation}
If $\phi \in \mathcal E(X,\omega)$, the nonpluripolar part of $(dd^c\phi +\omega)^j\wedge \omega^{n-j}$ has total mass one (\cite{[BEGZ]}, Corollary 2.15); indeed, formula (\ref{energeqn}), appropriately interpreted, makes sense for such $\phi$ (\cite{[BEGZ]}, Corollary 2.18).
 
 \begin{definition} We write
 $$\mathcal E^1(X,\omega):=\{\phi \in \mathcal E(X,\omega): E(\phi) > -\infty\}.$$
 \end{definition}

Let ${\mathcal M}(X)$ denote the probability measures on $X$. A result of Guedj and Zeriahi in \cite{[GZ2]} is the following.
 
 \begin{theorem} \label{monge} \cite{[GZ2]} Let $\mu\in {\mathcal M}(X)$. 
 \begin{enumerate}
 \item If $\mu(P)=0$ whenever $P\subset X$ is pluripolar, there is a $\phi \in \mathcal E(X,\omega)$ with $MA(\phi)=\mu$.
\item  If $\mathcal E^1(X,\omega)\subset L^1(\mu)$, there is a $\phi \in \mathcal E^1(X,\omega)$ with $MA(\phi)=\mu$.
\end{enumerate}
 \end{theorem}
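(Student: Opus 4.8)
The plan is to follow the variational method of \cite{[BBGZ]}, handling the two parts in reverse order and deducing (1) from (2) by approximation. For part (2) the hypothesis $\mathcal E^1(X,\omega)\subset L^1(\mu)$ guarantees that $\int_X \phi\,d\mu$ is finite for every $\phi\in\mathcal E^1(X,\omega)$, so I would introduce the functional
$$F_\mu(\phi):= E(\phi)-\int_X \phi\,d\mu,\qquad \phi\in\mathcal E^1(X,\omega).$$
Since $E(\phi+c)=E(\phi)+c$ and $\mu$ is a probability measure, $F_\mu$ is invariant under adding constants, so one may restrict to the sup-normalized slice $\{\sup_X\phi=0\}$. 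The first step is to record that $E$ is concave and upper semicontinuous while $\phi\mapsto\int_X\phi\,d\mu$ is linear, so that $F_\mu$ is concave and usc; together with the $L^1(X)$-compactness of sup-normalized $\omega$-psh functions this produces a maximizer once coercivity is known.

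The second step, coercivity, is where the hypothesis enters quantitatively. The condition $\mathcal E^1(X,\omega)\subset L^1(\mu)$ is equivalent to $\mu$ having finite energy, and a uniform boundedness (closed graph) argument then yields an a priori estimate of the form $\int_X(-\phi)\,d\mu\le A\,\bigl(-E(\phi)\bigr)^{\alpha}+B$ with $\alpha<1$ for sup-normalized $\phi$. Hence $F_\mu(\phi)\to-\infty$ as $E(\phi)\to-\infty$, so any maximizing sequence has energy bounded below and, after passing to a subsequence, converges to some $\phi_0\in\mathcal E^1(X,\omega)$ attaining the supremum.

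The crux is the third step: verifying that the maximizer satisfies $MA(\phi_0)=\mu$. By the differentiability of $E$ on $\mathcal E^1$ established in \cite{[BB]}, \cite{BBnew}, its differential is $MA$, namely $\frac{d}{dt}\big|_{t=0}E(\phi+tv)=\int_X v\,MA(\phi)$, so the Euler--Lagrange condition formally reads $\int_X v\,\bigl(MA(\phi_0)-\mu\bigr)=0$, forcing $MA(\phi_0)=\mu$. I expect this to be the main obstacle, because arbitrary $v$ are not admissible directions inside $\mathcal E^1$ and the differentiation must be justified when $\mu$ is singular. The standard remedy is to perturb through the $\omega$-psh envelope $P(\cdot)$: for continuous $\chi$ one differentiates $t\mapsto E\bigl(P(\phi_0+t\chi)\bigr)-\int_X(\phi_0+t\chi)\,d\mu$ at its maximum $t=0$, and the orthogonality relation $\int_X\bigl(P(h)-h\bigr)\,MA(P(h))=0$ identifies the derivative as $\int_X\chi\,MA(\phi_0)-\int_X\chi\,d\mu$; vanishing for every $\chi$ yields $MA(\phi_0)=\mu$.

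Finally, part (1) follows by approximation. A Cegrell-type decomposition writes a general $\mu$ not charging pluripolar sets as $\mu=f\,MA(\psi)$ with $\psi$ bounded and $0\le f\in L^1(MA(\psi))$; truncating gives measures $\mu_j$ proportional to $\min(f,j)\,MA(\psi)$ of mass one, each satisfying $\mathcal E^1(X,\omega)\subset L^1(\mu_j)$, so part (2) furnishes solutions $MA(\phi_j)=\mu_j$. Passing to the limit and using the convergence of Monge--Amp\`ere measures along monotone sequences in $\mathcal E(X,\omega)$ together with the comparison principle gives $MA(\phi)=\mu$ for $\phi=\lim_j\phi_j$. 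The remaining difficulty---again the main obstacle at this stage---is to show that no mass escapes to the pluripolar set $\{\phi=-\infty\}$, which is exactly guaranteed by $\mu(P)=0$ for pluripolar $P$; this ensures $\int_X MA(\phi)=1$, i.e.\ $\phi\in\mathcal E(X,\omega)$.
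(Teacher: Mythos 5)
The paper does not prove Theorem \ref{monge}: it is stated as an attributed result imported from \cite{[GZ2]} (just as the variational machinery is imported from \cite{[BBGZ]} in Theorem \ref{varthm} and Corollary \ref{varprop}), so there is no proof of the authors' own to compare yours against. Your outline is, in substance, the standard proof from that cited literature: the argument for part (2) is exactly the \cite{[BBGZ]} scheme (compactness of sup-normalized $\omega$-psh functions, an a priori bound coming from $\mathcal E^1(X,\omega)\subset L^1(\mu)$, and the Euler--Lagrange step via the envelope $P$ and the orthogonality relation, i.e.\ the differentiability of $E\circ P$ from \cite{[BB]}, \cite{BBnew}); part (1) via the decomposition $\mu=f\,MA(\psi)$ with $\psi$ bounded and truncation is how \cite{[GZ2]} argue, and their original proof is non-variational and predates \cite{[BBGZ]}. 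Running (2) first and deducing (1) from it is legitimate, since $\min(f,j)\,MA(\psi)$ with $\psi$ bounded does satisfy $\mathcal E^1(X,\omega)\subset L^1(\mu_j)$.

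Two steps in your sketch are asserted rather than established. First, $F_\mu$ is not obviously usc for the $L^1$-topology: $E$ is usc, but $\phi\mapsto\int_X\phi\,d\mu$ is \emph{also} usc (Hartogs plus the fact that $\mu$ charges no pluripolar set), so the linear term has the wrong semicontinuity. One must show $\int_X\phi_j\,d\mu\to\int_X\phi\,d\mu$ along a maximizing sequence, and this is where the sublinear estimate (uniform integrability on energy-bounded sets) is genuinely needed, not only for coercivity. Second, in part (1) the measures $c_j\min(f,j)\,MA(\psi)$ are not nested, so the comparison principle does not immediately make $\phi_j$ monotone; one must renormalize the truncation as in \cite{[GZ2]} (or establish convergence in capacity) before invoking continuity of $MA$ along the sequence, and the verification that the limit lies in $\mathcal E(X,\omega)$ --- which you correctly flag as the crux --- requires the capacity estimates of \cite{[GZ2]} rather than following directly from $\mu(P)=0$.
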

 
We remark that an equivalent characterization of  $\mathcal E^1(X,\omega)$ is the following: {\it for $\phi \in PSH(X,\omega)$, we have $\phi \in \mathcal E^1(X,\omega)$ if and only if $\int_X MA(\phi) =1$ and $\int_X \phi MA(\phi) >-\infty$} (\cite{[BEGZ]}, Proposition 2.11). 
 
  In \cite{[BBGZ]}, a variational approach to Theorem \ref{monge} is given. Given $\mu \in {\mathcal M}(X)$, define a functional $F_{\mu}$ on $PSH(X,\omega)$ via
 $$F_{\mu}(\phi)=E(\phi)-\int_X \phi d\mu.$$
 
 \begin{theorem} \cite{[BBGZ]}
 \label{varthm}
 Given $\mu \in {\mathcal M}(X)$ with $\mathcal E^1(X,\omega)\subset L^1(\mu)$, 
 $$F_{\mu}(\phi) =\sup_{\psi \in \mathcal E^1(X,\omega)}F_{\mu}(\psi)$$ for some $\phi \in \mathcal E^1(X,\omega)$ if and only if $\mu =MA(\phi)$.
 \end{theorem}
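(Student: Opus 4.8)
The plan is to exploit that $F_\mu$ is concave along affine segments in the convex set $\mathcal E^1(X,\omega)$, so that $\phi$ is a global maximizer precisely when the first variation of $F_\mu$ at $\phi$ is nonpositive in every admissible direction; the whole argument then rests on the first-variation formula for the energy. Specifically, I would take as the key analytic input the two facts, established in \cite{[BB]} and \cite{BBnew}, that $E$ is concave on $\mathcal E^1(X,\omega)$ and that its Gateaux derivative is the Monge-Amp\`ere measure, i.e. along a segment $\phi_t=(1-t)\phi+t\psi$ with $\phi,\psi\in \mathcal E^1(X,\omega)$ one has
$$\frac{d}{dt}\Big|_{t=0^+}E(\phi_t)=\int_X(\psi-\phi)\,MA(\phi).$$
The standing hypothesis $\mathcal E^1(X,\omega)\subset L^1(\mu)$ is exactly what makes $F_\mu$ finite on $\mathcal E^1(X,\omega)$ and all the integrals below well-defined.

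For the (easy) implication $\mu=MA(\phi)\Rightarrow \phi$ maximizes $F_\mu$, I would argue directly by concavity. Given any $\psi\in \mathcal E^1(X,\omega)$, concavity of $t\mapsto E(\phi_t)$ gives $E(\psi)-E(\phi)\le \int_X(\psi-\phi)\,MA(\phi)=\int_X(\psi-\phi)\,d\mu$, using $MA(\phi)=\mu$. Subtracting $\int_X(\psi-\phi)\,d\mu$ from both sides yields $F_\mu(\psi)-F_\mu(\phi)=\bigl(E(\psi)-E(\phi)\bigr)-\int_X(\psi-\phi)\,d\mu\le 0$, so $\phi$ is a maximizer.

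For the (hard) converse, suppose $\phi$ maximizes $F_\mu$. Here affine perturbations are not enough, since a maximizer satisfies only a one-sided inequality; instead I would use the projection $P(u):=\bigl(\sup\{\psi\in PSH(X,\omega):\psi\le u\}\bigr)^*$ onto $PSH(X,\omega)$. Fix $v\in C(X)$. Since $\phi\in \mathcal E^1(X,\omega)$ and $v$ is bounded, $P(\phi+tv)$ is squeezed between two translates of $\phi$ and hence lies in $\mathcal E^1(X,\omega)$ for all $t$, with $P(\phi)=\phi$. Setting
$$g(t):=E\bigl(P(\phi+tv)\bigr)-\int_X(\phi+tv)\,d\mu,$$
the inequality $P(\phi+tv)\le \phi+tv$ gives $F_\mu\bigl(P(\phi+tv)\bigr)\ge g(t)$, while maximality of $\phi$ gives $F_\mu\bigl(P(\phi+tv)\bigr)\le F_\mu(\phi)=g(0)$; hence $g(t)\le g(0)$ for all $t$, i.e. $g$ attains an interior maximum at $t=0$. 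Differentiating and setting $g'(0)=0$ yields $\int_X v\,MA(\phi)=\int_X v\,d\mu$ for every $v\in C(X)$, whence $MA(\phi)=\mu$ since both are probability measures.

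The hard part will be the differentiability at $t=0$ of the projected energy $t\mapsto E(P(\phi+tv))$ together with the identification of its derivative as $\int_X v\,MA(\phi)$ (the Monge-Amp\`ere mass concentrating on the contact set $\{P(\phi+tv)=\phi+tv\}$, which at $t=0$ is all of $X$); this is precisely the deep regularity result of \cite{[BB]}, \cite{BBnew}. I would also invoke the stability of the class $\mathcal E^1(X,\omega)$ under bounded perturbation and projection, using the domination properties of \cite{[GZ2]} and \cite{[BEGZ]}, to justify $P(\phi+tv)\in \mathcal E^1(X,\omega)$; all remaining integrability points, such as finiteness of $\int_X\phi\,d\mu$, follow from the hypothesis $\mathcal E^1(X,\omega)\subset L^1(\mu)$.
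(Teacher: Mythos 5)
The paper does not prove this theorem; it is quoted verbatim from \cite{[BBGZ]} as an external input. Your reconstruction is in fact the argument of \cite{[BBGZ]} itself: the easy direction via concavity of $E$ and the bound $E(\psi)-E(\phi)\le\int_X(\psi-\phi)\,MA(\phi)$, and the converse via the envelope $P(\phi+tv)$, the squeeze $F_\mu(P(\phi+tv))\ge g(t)$ together with $g(t)\le g(0)$, and the identity $\frac{d}{dt}\big|_{t=0}E(P(\phi+tv))=\int_X v\,MA(\phi)$. The one point you should not pass over too quickly is that the Berman--Boucksom differentiability theorem for $t\mapsto E(P(u+tv))$ is proved in \cite{[BB]}, \cite{BBnew} for $u,v$ continuous (equivalently bounded $\phi$), whereas here $\phi$ is only in $\mathcal E^1(X,\omega)$ and may be unbounded; extending the first-variation formula to this setting is precisely the technical content of the corresponding lemma in \cite{[BBGZ]} (an approximation by the canonical cutoffs $\max[\phi,-j]$ using the continuity properties of $E$ along monotone sequences), and your proof should either cite that extension explicitly or supply the approximation. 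With that caveat, the proposal is correct and coincides with the source's proof.
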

 
Following \cite{[BBGZ]}, we define the {\it electrostatic energy of $\mu \in {\mathcal M}(X)$}:
 $$E^*(\mu):= \sup_{\phi \in  \mathcal E^1(X,\omega)} F_{\mu}(\phi)= \sup_{\phi \in \mathcal E^1(X,\omega)}[E(\phi)-\int_X \phi d\mu].$$
In terms of $E^*(\mu)$, Theorem \ref{varthm} yields (see also Proposition 2.5 of \cite{[Ber]}):

 \begin{corollary} \cite{[BBGZ]}
 \label{varprop} Given $\mu \in {\mathcal M}(X)$
\begin{equation} \label{char} E^*(\mu) <+\infty \ \hbox{if and only if} \ \mathcal E^1(X,\omega)\subset L^1(\mu), \end{equation} 
and, in this case, writing $\mu =MA(\phi)$ for $\phi \in \mathcal E^1(X,\omega)$, 
 \begin{equation}
\label{relation}
E^*(\mu)= E(\phi)-\int_X \phi d\mu.
\end{equation}
\end{corollary}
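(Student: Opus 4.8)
The plan is to deduce both assertions directly from the variational characterization in Theorem \ref{varthm} together with the existence result Theorem \ref{monge}(2), using only the elementary bookkeeping that $E$ is finite on $\mathcal E^1(X,\omega)$ and that $\omega$-psh functions are bounded above.

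First I would handle the implication that $\mathcal E^1(X,\omega)\subset L^1(\mu)$ forces $E^*(\mu)<+\infty$, establishing (\ref{relation}) along the way. Assuming $\mathcal E^1(X,\omega)\subset L^1(\mu)$, Theorem \ref{monge}(2) produces $\phi\in\mathcal E^1(X,\omega)$ with $MA(\phi)=\mu$. By Theorem \ref{varthm} this $\phi$ realizes the supremum defining $E^*(\mu)$, so $E^*(\mu)=F_\mu(\phi)=E(\phi)-\int_X\phi\,d\mu$, which is exactly (\ref{relation}). Both terms on the right are finite: $E(\phi)>-\infty$ because $\phi\in\mathcal E^1(X,\omega)$, while $E(\phi)<+\infty$ follows from monotonicity (\ref{emono}) since $E(\phi)=\inf_j E(\max[\phi,-j])\le E(\max[\phi,-1])$ and $\max[\phi,-1]$ is bounded; and $\int_X\phi\,d\mu$ is finite because $\phi\in L^1(\mu)$. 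Hence $E^*(\mu)<+\infty$.

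For the reverse implication in (\ref{char}) I would argue by contraposition: suppose $\mathcal E^1(X,\omega)\not\subset L^1(\mu)$ and pick $\psi\in\mathcal E^1(X,\omega)$ with $\psi\notin L^1(\mu)$. Since $\psi$ is usc on the compact $X$ it is bounded above, so $\int_X\psi^+\,d\mu<\infty$; thus $\psi\notin L^1(\mu)$ means $\int_X\psi\,d\mu=-\infty$. As $E(\psi)>-\infty$, we get $F_\mu(\psi)=E(\psi)-\int_X\psi\,d\mu=+\infty$, whence $E^*(\mu)=\sup_{\phi\in\mathcal E^1(X,\omega)}F_\mu(\phi)=+\infty$. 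This proves that $E^*(\mu)<+\infty$ implies $\mathcal E^1(X,\omega)\subset L^1(\mu)$, completing (\ref{char}).

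The genuine content is entirely carried by the cited Theorems \ref{varthm} and \ref{monge}; the present corollary is a reformulation, so I expect no serious obstacle beyond the finiteness bookkeeping. The one point requiring care is the integrability splitting in the contrapositive: one must invoke boundedness above of $\omega$-psh functions to guarantee $\int_X\psi^+\,d\mu<\infty$, so that failure of $L^1(\mu)$-membership is equivalent to $\int_X\psi\,d\mu=-\infty$ rather than an indeterminate $\infty-\infty$. With that observation the value $F_\mu(\psi)$ is unambiguously $+\infty$ and the contradiction with $E^*(\mu)<+\infty$ is clean.
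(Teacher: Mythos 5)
Your argument is correct and follows exactly the route the paper intends: the paper states the corollary as an immediate consequence of Theorem \ref{varthm} (together with Theorem \ref{monge}(2)) without writing out the details, and your derivation supplies precisely those details, including the contrapositive step (boundedness above of $\psi$ forcing $\int_X\psi\,d\mu=-\infty$, hence $F_\mu(\psi)=+\infty$) that the paper itself implicitly uses later in the proof of Proposition \ref{gap}. No gaps.
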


For $\mu  \in {\mathcal M}(H)$ where $H$ is a compact set in $\CC^n$, the connection between $E^*(\mu)$ and our functionals $J(\mu), \ W(\mu)$ begins with equation (2.3) in \cite{[Ber]}. Since this is a crucial result, we provide details. Let $[z_0:z_1:\cdots :z_n]$ be homogeneous coordinates on $X=\PP^n$. Identifying $\CC^n$ with the affine subset of $\PP^n$ given by $\{[1:z_1:\cdots:z_n]\}$, we can identify the $\omega-$psh functions with the Lelong class $L(\CC^n)$, i.e., 
$$PSH(X,\omega) \approx L(\CC^n),$$
and the bounded (from below) $\omega-$psh functions coincide with the subclass $L^+(\CC^n)$. Indeed, if $\phi \in PSH(X,\omega)$, then (cf., \cite{[GZ]}) 
$$u(z)=u(z_1,...,z_n):= \phi ([1:z_1:\cdots:z_n])+\frac{1}{2}\log (1+|z|^2)\in  L(\CC^n);$$
if $u\in  L(\CC^n)$, define $\phi \in PSH(X,\omega)$ via
$$\phi ([1:z_1:\cdots:z_n])=u(z)-\frac{1}{2}\log (1+|z|^2) \ \hbox{and}$$ 
$$\phi ([0:z_1:\cdots:z_n])=\limsup_{t\to \infty, \ t\in \CC}[u(tz)-\frac{1}{2}\log (1+|tz|^2)].$$ 
Abusing notation, we write $u= \phi +u_0$ where $u_0(z):=\frac{1}{2}\log (1+|z|^2)$. 

For $u\in L^+(\CC^n)$ we consider the functional 
 $$E(u):= \frac{1}{n+1}\sum_{j=0}^n\int_{\CC^n} (u-u_0) dd^cu^j\wedge dd^cu_0^{n-j}.$$
To justify our use of the same notation as in (\ref{energeqn}), note for $u\in L^+(\CC^n)$ and its associated bounded $\phi \in PSH(X,\omega)$, we have $E(u)=E(\phi)$. We extend the functional to $u= \phi +u_0\in L(\CC^n)$ using the canonical approximations $u_j := \max[\phi,-j]+u_0$ and observe that the relation $E(u)=E(\phi)$ still holds if $u\in L(\CC^n)$ satisfies (\ref{nonpp}) as the nonpluripolar part of each $dd^cu^j\wedge dd^cu_0^{n-j}$ has total mass one.
  
 Now suppose $\mu \in {\mathcal M}(H)$ with $H\subset \CC^n$ is such that there exists $u=\phi + u_0\in L(\CC^n)$ with $(dd^cu)^n=\mu$ and $\int_H ud\mu >-\infty$, our a priori assumption in Section \ref{sec:lbjmu}. Since $$ \int_H (dd^cu)^n =1 \ \hbox{implies} \ \int_H u_0(dd^cu)^n < +\infty$$
we have
$$\int_H(u-u_0) (dd^cu)^n >-\infty \ \hbox{is equivalent to} \ \int_Hu (dd^cu)^n >-\infty.$$
But this says that $\phi \in PSH(X,\omega)$ satisfies $\mu =MA(\phi)$; $\int_X MA(\phi) =1$; and $\int_X \phi MA(\phi) >-\infty$ -- that is, $\phi \in \mathcal E^1(X,\omega)$.

\begin{proposition} \label{gap} Let $\mu  \in {\mathcal M}(H)$ where $H$ is a compact set in $\CC^n$. Then
$$E^*(\mu)=\sup_{\tilde Q}[E(V^*_{H,\tilde Q})-\int_H \tilde Q d\mu]+\int_H u_0 d\mu$$
where the supremum is taken over all continuous $\tilde Q$ on $H$. 
\end{proposition}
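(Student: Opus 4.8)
The plan is to pass to the affine picture, rewrite $E^*(\mu)$ as a supremum over Lelong-class functions, and then show that this supremum is unchanged when we restrict the competitors to the weighted extremal functions $V^*_{H,\tilde Q}$ and simultaneously replace the integrand $V^*_{H,\tilde Q}$ by $\tilde Q$. First I would use the correspondence $u=\phi+u_0$ together with $E(u)=E(\phi)$ and the fact that $\mu$ is carried by $H\subset\CC^n$ to write
$$E^*(\mu)=\sup_{\phi\in\mathcal E^1(X,\omega)}\Bigl[E(\phi)-\int_X\phi\,d\mu\Bigr]=\sup_u\Bigl[E(u)-\int_H u\,d\mu\Bigr]+\int_H u_0\,d\mu,$$
the supremum running over all $u\in L(\CC^n)$ whose associated $\phi=u-u_0$ lies in $\mathcal E^1(X,\omega)$; here $\int_H u_0\,d\mu$ is finite since $u_0$ is continuous and $H$ is compact. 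It then suffices to prove the identity $\sup_u[E(u)-\int_H u\,d\mu]=\sup_{\tilde Q}[E(V^*_{H,\tilde Q})-\int_H\tilde Q\,d\mu]$ with $\tilde Q$ continuous on $H$; I will denote the left side by $A$ and the right side by $B$.

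To get $B\ge A$, I would take an arbitrary competitor $u$ and choose smooth $\omega$-psh functions $\phi_j\downarrow\phi$ (which exist on $\PP^n$), set $v_j=\phi_j+u_0$ and $\tilde Q_j=v_j|_H$, continuous on $H$. Since $u\le v_j$ everywhere, $u$ competes in the envelope defining $V_{H,\tilde Q_j}$, so $V^*_{H,\tilde Q_j}\ge u$, and monotonicity (\ref{emono}) gives $E(V^*_{H,\tilde Q_j})\ge E(u)$. As $\tilde Q_j\downarrow u|_H$ on $H$, monotone convergence yields $\int_H\tilde Q_j\,d\mu\to\int_H u\,d\mu\in[-\infty,\infty)$, so that
$$B\ \ge\ E(V^*_{H,\tilde Q_j})-\int_H\tilde Q_j\,d\mu\ \ge\ E(u)-\int_H\tilde Q_j\,d\mu\ \longrightarrow\ E(u)-\int_H u\,d\mu.$$
Taking the supremum over $u$ gives $B\ge A$; this argument also covers the degenerate case in which some $u$ has $\int_H u\,d\mu=-\infty$, which forces both $A$ and $B$ to equal $+\infty$.

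For $A\ge B$, each $V^*_{H,\tilde Q}$ (for continuous $\tilde Q$ on nonpluripolar $H$) lies in $L^+(\CC^n)$ and is therefore itself an admissible competitor, so $E(V^*_{H,\tilde Q})-\int_H V^*_{H,\tilde Q}\,d\mu\le A$. By definition of the envelope, $V_{H,\tilde Q}\le\tilde Q$ on $H$, and $V^*_{H,\tilde Q}=V_{H,\tilde Q}$ off a pluripolar set. The main obstacle is precisely this pluripolar exceptional set: if $\mu$ charges no pluripolar set, then $V^*_{H,\tilde Q}\le\tilde Q$ $\mu$-a.e., whence $E(V^*_{H,\tilde Q})-\int_H\tilde Q\,d\mu\le E(V^*_{H,\tilde Q})-\int_H V^*_{H,\tilde Q}\,d\mu\le A$ and so $B\le A$. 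To dispose of the remaining case I would invoke the characterization (\ref{char}): if $\mu$ charges a pluripolar set, then $\mathcal E^1(X,\omega)\not\subset L^1(\mu)$, so $E^*(\mu)=+\infty=A$ and $A\ge B$ holds trivially. Combining the two inequalities gives $A=B$ and hence the proposition. I expect the one point needing care to be the dichotomy that $E^*(\mu)<+\infty$ holds exactly when $\mu$ puts no mass on pluripolar sets — equivalently, that every pluripolar set lies in the $-\infty$ locus of some function of $\mathcal E^1(X,\omega)$ — but this is available from the Guedj–Zeriahi theory underlying Corollary \ref{varprop}.
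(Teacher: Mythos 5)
Your proof is correct and follows essentially the same route as the paper's: both reduce $E^*(\mu)$ to a supremum over Lelong-class competitors $u=\phi+u_0$ and then match that supremum with the one over weighted extremal functions, using decreasing continuous approximation together with the monotonicity (\ref{emono}) for one inequality and $V^*_{H,\tilde Q}\le \tilde Q$ on $H$ for the other. The only minor variation is at the negligible set where $V^*_{H,\tilde Q}>V_{H,\tilde Q}$: you dispose of it via the dichotomy on whether $\mu$ charges pluripolar sets, while the paper simply invokes (\ref{suppw}), valid since $H$ is locally regular under its standing hypotheses.
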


\begin{proof} We first show that the right-hand-side coincides with
\begin{equation} \label{newe} \tilde E^*(\mu):= \sup [E(u)-\int_H (u-u_0) d\mu] \end{equation}
where the supremum is taken over all $u\in L(\CC^n)$ satisfying (\ref{nonpp}) and $ \int u(dd^cu)^n >-\infty$. Indeed, for an arbitrary $\mu  \in {\mathcal M}(H)$, from (\ref{newe}) and (\ref{suppw}) we have
$$\tilde E^*(\mu)\geq \sup_{\tilde Q}[E(V^*_{H,\tilde Q})-\int_H \tilde Q d\mu]+\int_H u_0 d\mu$$
where the supremum is taken over all continuous $\tilde Q$ on $H$. For the reverse inequality, fix $u\in L(\CC^n)$ with
 $$ \int_{\CC^n} (dd^cu)^n =1 \ \hbox{and} \  \int_{\CC^n}u (dd^cu)^n >-\infty.$$
From Remark \ref{decapprox}, we can construct continuous $u_j\in L^+(\CC^n)$ with $u_j \downarrow \tilde u\geq u$ such that $\tilde u = u$ on $H$ and $\mu_j:=(dd^cu_j)$ supported in $H$ for all $j$. In particular, $u_j = V_{H,Q_j}$ where $Q_j =u_j|_H$ is continuous. Thus, given $\epsilon >0$, we can choose $j$ sufficiently large so that, by monotone convergence,
$$-\int_H u_j d\mu \geq - \int_H u d\mu -\epsilon;$$
and, by (\ref{emono}), 
$$E(V_{H,Q_j})\geq E(u).$$
Hence
$$E(V_{H,Q_j}) - \int_H (u_j -u_0)d\mu \geq E(u)- \int_H (u-u_0) d\mu -\epsilon.$$
so that 
$$\sup_{\tilde Q}[E(V^*_{H,\tilde Q})-\int_H \tilde Q d\mu]+\int_H u_0 d\mu\geq \tilde E^*(\mu)$$
and equality holds.

To see that $E^*(\mu) =\tilde E^*(\mu)$, first assume $E^*(\mu) < +\infty$. By Corollary \ref{varprop}, $\mu =MA(\phi)$ for some $\phi \in \mathcal E^1(X,\omega)$ and 
$$E^*(\mu)= E(\phi)-\int_H \phi d\mu.$$
For the associated $u=\phi +u_0\in L(\CC^n)$ we have 
$$ \int_{\CC^n} (dd^cu)^n =\int_H (dd^cu)^n =\int_H MA(\phi)=1$$
and
$$\int_H(u-u_0) (dd^cu)^n =\int_H \phi MA(\phi) =\int_H \phi d\mu > -\infty$$
so that $\tilde E^*(\mu)\geq E(u)-\int_H (u-u_0) d\mu$. But
$$E(\phi)-\int_H \phi d\mu= E(u) - \int_H (u-u_0) d\mu$$
so $E^*(\mu) =\tilde E^*(\mu)$. If, on the other hand, $E^*(\mu) =+\infty$, Corollary \ref{varprop} provides the existence of $\phi \in \mathcal E^1(X,\omega)$ with $\phi \not\in L^1(\mu)$. By subtracting a constant we can assume $\phi \leq 0$ so that $\int_H \phi d\mu=-\infty$. Then for the associated $u=\phi +u_0\in L(\CC^n)$ we have $\int_H (u-u_0)d\mu =-\infty$; hence $\tilde E^*(\mu)=+\infty$.

\end{proof} 

The following corollary combines Proposition \ref{gap} with Theorem \ref{overthm}. Recall (\ref{torus}): $V_T(z) =\max_{j=1,...,n}\max [\log |z_j|, 0]$. 

\begin{corollary} \label{aftergap} Let $\mu  \in {\mathcal M}(H)$ where $H$ is a nonpluripolar, compact, convex set in $\CC^n$. Then
\begin{equation} \label{j=w=e} \log J(\mu)= \log W(\mu) =\inf_w [\log \bar \delta^w (H) +\int_H Qd\mu]\end{equation}
$$=-E^*(\mu)+\int_H u_0 d\mu +E(V_T)$$ 
where the infimum is taken over all  $w=e^{-Q}>0$ continuous on $H$. In particular, we have $E^*(\mu)=+\infty$ if and only if $J(\mu)=W(\mu)=0$.
\end{corollary}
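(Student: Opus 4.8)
The plan is to combine Proposition~\ref{gap} with Theorem~\ref{overthm}, reducing the corollary to a single explicit identification of constants. The second displayed equality in (\ref{j=w=e}) is exactly the content of Theorem~\ref{overthm}, so the real work is establishing the third line,
$$\inf_w [\log \bar \delta^w (H) +\int_H Qd\mu]=-E^*(\mu)+\int_H u_0 d\mu +E(V_T).$$
First I would relate the weighted transfinite diameter $\log \bar\delta^w(H)$ to the energy $E$ of the weighted equilibrium potential. The key link is a formula of the form $\log \bar\delta^w(H)=E(V^*_{H,Q})+\hbox{const}$, where the additive constant is universal (independent of $H$ and $Q$). To pin down the constant I would evaluate both sides in the reference case $H=T$ the unit torus with $Q\equiv 0$: there $V_T$ is given explicitly by (\ref{torus}), so $E(V_T)$ is computable, and the unweighted transfinite diameter $\bar\delta(T)$ is a known quantity. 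This calibration is what forces the $E(V_T)$ term to appear.

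Granting this, I would take the infimum over continuous weights $w=e^{-Q}>0$. Using $\log\bar\delta^w(H)=E(V^*_{H,Q})+\hbox{const}$, the infimum becomes
$$\inf_{Q}\bigl[E(V^*_{H,Q})+\int_H Q\,d\mu\bigr]+\hbox{const}=-\sup_{Q}\bigl[-E(V^*_{H,Q})-\int_H Q\,d\mu\bigr]+\hbox{const}.$$
Now Proposition~\ref{gap} identifies $\sup_{\tilde Q}[E(V^*_{H,\tilde Q})-\int_H \tilde Q\,d\mu]=E^*(\mu)-\int_H u_0\,d\mu$. Replacing $Q$ by $-Q$ in the supremum (or tracking the sign conventions carefully) converts the infimum over weights into $-E^*(\mu)+\int_H u_0\,d\mu$ up to the universal constant, which the torus calibration fixes as $E(V_T)$. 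This yields the third equality.

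The final sentence is then immediate: since $\int_H u_0\,d\mu$ is finite (as $u_0=\frac12\log(1+|z|^2)$ is continuous and $H$ is compact) and $E(V_T)$ is a finite constant, the right-hand side of (\ref{j=w=e}) equals $-\infty$ if and only if $E^*(\mu)=+\infty$; and $\log J(\mu)=\log W(\mu)=-\infty$ is precisely the statement $J(\mu)=W(\mu)=0$. I expect the main obstacle to be the clean identification $\log\bar\delta^w(H)=E(V^*_{H,Q})+E(V_T)+(\hbox{terms})$ with the correct universal constant: the transfinite-diameter side is defined through Vandermonde determinants while the energy side is defined through Monge--Amp\`ere integrals, so matching them rigorously requires invoking the deep results of \cite{[BB]}, \cite{BBnew} (or \cite{[BL]}) that connect asymptotics of Fekete-type quantities to the energy functional, together with careful bookkeeping of the normalizations (the factor $\frac{n}{n+1}$ hidden in $\bar\delta$, and the normalization $\int(dd^cu)^n=1$). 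Once that bridge is in place, the remaining steps are formal manipulations of the variational characterization in Proposition~\ref{gap}.
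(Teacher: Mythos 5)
Your overall route is the paper's: the bridge is the Berman--Boucksom energy formula for the weighted transfinite diameter, combined with Proposition \ref{gap} and Theorem \ref{overthm}. One slip first: the formula is $-\log\bar\delta^w(H)=E(V^*_{H,Q})-E(V_T)$, i.e.\ $\log\bar\delta^w(H)=-E(V^*_{H,Q})+E(V_T)$, not $\log\bar\delta^w(H)=E(V^*_{H,Q})+\mathrm{const}$ as you wrote. With the correct sign no substitution $Q\mapsto -Q$ is needed (and that substitution cannot be made to work, since $V^*_{H,-Q}$ bears no useful relation to $V^*_{H,Q}$): one has directly $\inf_Q[-E(V^*_{H,Q})+\int_H Q\,d\mu]+E(V_T)=-\sup_Q[E(V^*_{H,Q})-\int_H Q\,d\mu]+E(V_T)$, and Proposition \ref{gap} finishes. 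Since you hedged about tracking sign conventions, I count this as a correctable slip rather than a gap.

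The genuine gap is the case $E^*(\mu)=+\infty$, which is exactly what the final sentence of the corollary is about. You invoke Theorem \ref{overthm} for all $\mu\in{\mathcal M}(H)$, but at this point in the paper the theorem has only been proved for measures $\mu=(dd^cu)^n$ with $\int_H u\,d\mu>-\infty$ (Section \ref{sec:lbjmu} is explicitly titled and restricted to that case), and by Corollary \ref{varprop} together with the discussion preceding Proposition \ref{gap} that hypothesis is equivalent to $E^*(\mu)<+\infty$. So reading the ``in particular'' assertion off the chain of equalities in (\ref{j=w=e}) is circular: when $E^*(\mu)=+\infty$ the equality $\log J(\mu)=\log W(\mu)=\inf_w[\cdots]$ is not yet available. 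The paper closes this case with the unconditional upper bound of Proposition \ref{upper}: $\log J(\mu)\le\log W(\mu)\le\inf_w[\log\bar\delta^w(H)+\int_H Q\,d\mu]=-E^*(\mu)+\int_H u_0\,d\mu+E(V_T)=-\infty$, which forces $J(\mu)=W(\mu)=0$ and makes all displayed quantities equal to $-\infty$, so the stated equalities hold trivially there. You need this one extra step; the rest of your outline matches the paper's argument.
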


\begin{proof}  First of all, from results of Berman and Boucksom in \cite{BBnew}, 
$$ -\log  \delta^w (H)=(\frac{n+1}{n}) [E(V^*_{H,Q})-E(V_T)]; \ \hbox{i.e.},$$ 
$$-\log  \bar \delta^w (H)= E(V^*_{H,Q})-E(V_T).$$
Thus we get from Theorem \ref{overthm} and Proposition \ref{gap} that
$$\log J(\mu)= \log W(\mu) =  \inf_w [\log \bar \delta^w (H) +\int_H Qd\mu]$$
$$=-E^*(\mu)+\int_H u_0 d\mu +E(V_T)> -\infty$$ 
for all $\mu  \in {\mathcal M}(H)$ for which there exists $u\in L(\CC^n)$ satisfying (\ref{nonpp}) with $(dd^cu)^n=\mu$ and $\int_H ud\mu > -\infty$. From Corollary \ref{varprop}, to complete the proof we must show that if $E^*(\mu)=+\infty$ then $J(\mu)=W(\mu)=0$. This follows from Proposition \ref{gap} and Proposition \ref{upper}: 
$$\log J(\mu) \leq \log W(\mu)\leq \inf_w [\log \bar \delta^w (H) +\int_H Qd\mu]$$
$$=-E^*(\mu)+\int_H u_0 d\mu +E(V_T).$$
\end{proof}

From Corollary \ref{overcor} and Corollary \ref{aftergap}, we also have the following.

\begin{corollary}\label{finalcor} Let $\mu  \in {\mathcal M}(H)$ where $H$ is a nonpluripolar, compact, convex set in $\CC^n$ and let $Q$ be continuous on $H$. Then
\begin{equation}\label{minwtd}\log J^Q(\mu)=\log W^Q(\mu)= -E^*(\mu)-\int_H (Q-u_0)d\mu +E(V_T).\end{equation}
\end{corollary}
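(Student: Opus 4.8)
The plan is to derive Corollary \ref{finalcor} directly by combining the two inputs the statement names: the relation between the weighted and unweighted functionals (Corollary \ref{overcor}) and the explicit evaluation of the unweighted functionals in terms of $E^*(\mu)$ (Corollary \ref{aftergap}). The key observation is that $\log J^Q$ and $\log W^Q$ differ from their unweighted counterparts by exactly $-\int_H Q\,d\mu$, so there is no new limiting or approximation analysis to perform — everything reduces to bookkeeping with the terms already computed.

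First I would invoke Proposition \ref{easyprop}, which gives $J(\mu)=J^Q(\mu)\cdot e^{\int_H Qd\mu}$ and $W(\mu)=W^Q(\mu)\cdot e^{\int_H Qd\mu}$ for any continuous weight $w=e^{-Q}>0$. Taking logarithms yields
$$\log J^Q(\mu)=\log J(\mu)-\int_H Q\,d\mu \quad\hbox{and}\quad \log W^Q(\mu)=\log W(\mu)-\int_H Q\,d\mu.$$
In particular $\log J^Q(\mu)=\log W^Q(\mu)$ follows immediately from $\log J(\mu)=\log W(\mu)$ established in Corollary \ref{aftergap}. (Equivalently, one could read the same conclusion off Corollary \ref{overcor} by noting its right-hand side is precisely the unweighted infimum minus $\int_H Q\,d\mu$; the two routes give the same answer.)

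Next I would substitute the evaluation from Corollary \ref{aftergap},
$$\log J(\mu)=\log W(\mu)=-E^*(\mu)+\int_H u_0\,d\mu+E(V_T),$$
into the two displayed identities. This gives
$$\log J^Q(\mu)=\log W^Q(\mu)=-E^*(\mu)+\int_H u_0\,d\mu+E(V_T)-\int_H Q\,d\mu,$$
and combining the two measure-integral terms as $\int_H(Q-u_0)\,d\mu$ produces exactly the claimed formula (\ref{minwtd}).

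Since both inputs hold for every $\mu\in{\mathcal M}(H)$ on a nonpluripolar, compact, convex set $H$ (including the degenerate case $E^*(\mu)=+\infty$, where Corollary \ref{aftergap} gives $J(\mu)=W(\mu)=0$ and the formula reads $+\infty=+\infty$ on both sides), the identity holds unconditionally. There is no genuine obstacle here: the only point requiring a moment's care is ensuring the weighted and unweighted quantities are being compared for the \emph{same} weight $Q$, so that the $\int_H Q\,d\mu$ terms match up correctly rather than being absorbed into the infimum over competing weights $\tilde w$. Once that identification is made, the corollary is a direct consequence of the two cited results.
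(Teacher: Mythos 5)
Your proposal is correct and follows essentially the same route as the paper, which simply cites Corollary \ref{overcor} together with Corollary \ref{aftergap}; since Corollary \ref{overcor} is itself obtained from Theorem \ref{overthm} via Proposition \ref{easyprop}, your direct use of Proposition \ref{easyprop} is the same bookkeeping. (Only a cosmetic slip: in the degenerate case the identity reads $-\infty=-\infty$, not $+\infty=+\infty$.)
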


We conclude this section with the statement and proof of a version of the domination principle needed for the proof of Proposition \ref{goodapprox}. This is due to S. Dinew.

\begin{proposition} \label{domprin} Let $\psi \in PSH(X,\omega)$ and $\phi \in \mathcal E(X,\omega)$ satisfy $\psi \leq \phi$ a.e.-$(dd^c\phi +\omega)^n$. Then $\psi \leq \phi$ on $X$.
\end{proposition}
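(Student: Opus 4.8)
The plan is to establish the domination principle by a comparison argument on the model set where $\psi$ potentially exceeds $\phi$, combined with the fact that $\phi$ is a full-mass (maximal) potential in $\mathcal E(X,\omega)$. The key structural point is that $\phi$ has no Monge–Amp\`ere mass on pluripolar sets (since $\phi \in \mathcal E(X,\omega)$ means $\int_X MA(\phi) = 1 = \int_X \omega^n$, so the nonpluripolar measure captures the full mass), and the hypothesis $\psi \le \phi$ a.e.\ with respect to $MA(\phi)$ then says that $MA(\phi)$ is supported where the two functions agree.

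First I would set $u := \max(\psi,\phi)$; then $u \in PSH(X,\omega)$, $u \ge \phi$, and $u = \phi$ a.e.-$MA(\phi)$. Since $u \ge \phi$ and $\phi$ has full Monge–Amp\`ere mass, $u$ also has full mass, so $u \in \mathcal E(X,\omega)$. I would aim to show $MA(u) = MA(\phi)$. The mechanism is that on the contact set $\{u = \phi\}$ the two measures $MA(u)$ and $MA(\phi)$ coincide (a standard local property of Monge–Amp\`ere measures on sets where two $\omega$-psh functions agree), and the hypothesis forces $MA(\phi)$ to live on this contact set; a mass comparison using $\int MA(u) = \int MA(\phi) = 1$ together with $MA(u) \ge \mathbf 1_{\{u=\phi\}}MA(\phi)$ would then pin down $MA(u) = MA(\phi)$. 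Once the measures agree, I would invoke the uniqueness part of the Monge–Amp\`ere theory in $\mathcal E(X,\omega)$ (comparison/monotonicity of the energy $E$, or the variational characterization in Theorem \ref{varthm}): two functions in the relevant class with the same Monge–Amp\`ere measure differ by a constant, and since $u \ge \phi$ with equality on a set of positive $MA(\phi)$-mass, that constant is zero, giving $u = \phi$ and hence $\psi \le \phi$ everywhere.

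The hard part will be justifying the step $MA(\phi) = \mathbf 1_{\{u=\phi\}}MA(\phi)$ implies $MA(u)=MA(\phi)$, because one must control what happens on $\{u > \phi\} = \{\psi > \phi\}$ where the two potentials differ: the difficulty is ensuring no Monge–Amp\`ere mass of $u$ leaks onto this open (modulo pluripolar) region in a way that violates the total mass constraint. This requires the plurifine locality of the complex Monge–Amp\`ere operator for functions in $\mathcal E(X,\omega)$ and careful bookkeeping of mass, using that $\{u = \phi\}$ carries all the mass of $MA(\phi)$ while the total masses of $MA(u)$ and $MA(\phi)$ both equal one. An alternative, possibly cleaner route that I would keep in reserve is a direct energy/variational argument: using monotonicity of $E$ and the characterization of maximizers of $F_\mu$, show that $\phi$ maximizes a functional for which $u$ would be a strictly larger competitor unless $u = \phi$, thereby forcing equality without an explicit local Monge–Amp\`ere computation.
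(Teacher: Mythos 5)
Your overall architecture is exactly the paper's: set $u=\max(\psi,\phi)$, show $MA(u)\ge MA(\phi)$, use equality of the total masses to conclude $MA(u)=MA(\phi)$, invoke the uniqueness theorem of \cite{[D]} to get $u-\phi$ constant, and kill the constant using the hypothesis. Those outer steps are all fine. The genuine gap is in the one step you yourself flag as hard: you justify $MA(u)\ge\mathbf{1}_{\{u=\phi\}}MA(\phi)$ by asserting that $MA(u)$ and $MA(\phi)$ coincide on the contact set $\{u=\phi\}$ as ``a standard local property of Monge--Amp\`ere measures on sets where two $\omega$-psh functions agree.'' The locality statement actually available (Corollary 1.7 of \cite{[GZ2]}, the plurifine analogue of Bedford--Taylor locality) applies only on \emph{plurifine open} sets, and $\{u=\phi\}=\{\psi\le\phi\}$ is plurifine closed, not open, so it cannot be applied there directly. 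This is not a cosmetic point: the entire content of the proposition is concentrated in extracting the measure inequality from hypotheses that only see $MA(\phi)$-a.e.\ information, and as written that extraction is unsupported.

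The missing device, which is how the paper closes this gap, is an $\epsilon$-shift. For $\epsilon>0$ the set $\{\phi+\epsilon>\psi\}$ \emph{is} plurifine open and $\max(\phi+\epsilon,\psi)=\phi+\epsilon$ there, so Corollary 1.7 of \cite{[GZ2]} gives
$$\bigl(dd^c\max(\phi+\epsilon,\psi)+\omega\bigr)^n\ \ge\ \mathbf{1}_{\{\phi+\epsilon>\psi\}}\bigl(dd^c\phi+\omega\bigr)^n.$$
Letting $\epsilon\to 0$, the left side converges weak-$*$ to $\bigl(dd^c\max(\phi,\psi)+\omega\bigr)^n$ by continuity of the Monge--Amp\`ere operator along monotone sequences in $\mathcal E(X,\omega)$ (Theorem 1.9 of \cite{[GZ2]}), while the right side converges to $\mathbf{1}_{\{\phi\ge\psi\}}(dd^c\phi+\omega)^n$, which equals $(dd^c\phi+\omega)^n$ by hypothesis; testing against nonnegative continuous functions preserves the inequality in the limit. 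Inserting this, your argument closes. Your ``reserve'' variational route is not a free pass either: Theorem \ref{varthm} operates in $\mathcal E^1(X,\omega)$ with $\mathcal E^1(X,\omega)\subset L^1(\mu)$, whereas the proposition only assumes $\phi\in\mathcal E(X,\omega)$, so that route would prove a weaker statement than the one needed.
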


\begin{proof} By \cite{[GZ2]} Proposition 1.6,  $\max(\phi, \psi)\in \mathcal E(X,\omega)$; similarly, fixing $\epsilon >0$, $\max(\phi +\epsilon, \psi)\in \mathcal E(X,\omega)$. We have
$$\bigl(dd^c \max(\phi +\epsilon, \psi) +\omega \bigr)^n\geq {\bf 1}_{\{\phi +\epsilon> \psi\}}\bigl(dd^c \max(\phi +\epsilon, \psi) +\omega \bigr)^n$$
$$\geq {\bf 1}_{\{\phi +\epsilon> \psi\}}\bigl(dd^c (\phi +\epsilon) +\omega \bigr)^n={\bf 1}_{\{\phi +\epsilon> \psi\}}\bigl(dd^c \phi +\omega \bigr)^n$$
where the inequality in the previous line uses Corollary 1.7 of \cite{[GZ2]}. From Theorem 1.9 of \cite{[GZ2]},
\begin{equation} \label{romanone}
\bigl(dd^c \max(\phi +\epsilon, \psi) +\omega \bigr)^n\to \bigl(dd^c \max(\phi , \psi) +\omega \bigr)^n\end{equation}
weak-* as $\epsilon \to 0$; while
$${\bf 1}_{\{\phi +\epsilon> \psi\}}\bigl(dd^c \phi +\omega \bigr)^n\to {\bf 1}_{\{\phi \geq \psi\}}\bigl(dd^c \phi +\omega \bigr)^n$$
weak-* as $\epsilon \to 0$. By hypothesis, 
$${\bf 1}_{\{\phi \geq \psi\}}\bigl(dd^c \phi +\omega \bigr)^n=\bigl(dd^c \phi +\omega \bigr)^n.$$

We claim that these limiting measures satisfy
\begin{equation} \label{claimindom}
\bigl(dd^c \max(\phi , \psi) +\omega \bigr)^n\geq \bigl(dd^c \phi +\omega \bigr)^n.
\end{equation}
Assuming (\ref{claimindom}), since 
$$\int_X \bigl(dd^c \max(\phi , \psi) +\omega \bigr)^n =\int_X \bigl(dd^c \phi +\omega \bigr)^n=1,$$
we conclude that $\bigl(dd^c \max(\phi , \psi) +\omega \bigr)^n= \bigl(dd^c \phi +\omega \bigr)^n$. By the uniqueness result in \cite{[D]}, $\max(\phi , \psi) -\phi$ is a constant, which must be zero by hypothesis.

Finally, to prove (\ref{claimindom}), it suffices to show that if $f\in C(X)$ with $f\geq 0$, then
$$\int_X f \bigl(dd^c \max(\phi , \psi) +\omega \bigr)^n\geq \int_X f \bigl(dd^c \phi +\omega \bigr)^n.$$
By (\ref{romanone}), 
$$\int_X f \bigl(dd^c \max(\phi , \psi) +\omega \bigr)^n=\lim_{\epsilon \to 0} \int_X f \bigl(dd^c \max(\phi +\epsilon, \psi) +\omega \bigr)^n$$
$$\geq \lim_{\epsilon \to 0} \int_X f \cdot {\bf 1}_{\{\phi +\epsilon> \psi\}}\bigl(dd^c \phi +\omega \bigr)^n=
\int_X f \bigl(dd^c \phi +\omega \bigr)^n.$$
\end{proof}

\noindent Writing $u,v\in L(\CC^n)$ as $u=\phi +u_0,v=\psi +u_0$ with $\phi,\psi\in PSH(X,\omega)$, we have the $\CC^n-$version of the domination principle.

\begin{corollary}\label{domcorr} Let $u,v\in L(\CC^n)$ and suppose $u$ satisfies (\ref{nonpp}). If $v\leq u$ a.e-$(dd^cu)^n$, then $v\leq u$ in $\CC^n$.
\end{corollary}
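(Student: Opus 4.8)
The plan is to transfer the statement to $X=\PP^n$ via the correspondence $L(\CC^n)\approx PSH(X,\omega)$ developed earlier in this section, and then invoke Proposition \ref{domprin}. Following the notation introduced just above, I would write $u=\phi+u_0$ and $v=\psi+u_0$ with $\phi,\psi\in PSH(X,\omega)$, where $u_0(z)=\frac{1}{2}\log(1+|z|^2)$ and $dd^cu_0=\omega$ on the affine chart $\CC^n$.

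First I would verify the hypotheses of Proposition \ref{domprin}. Since $v\in L(\CC^n)$, its associate $\psi$ lies in $PSH(X,\omega)$ automatically. The assumption that $u$ satisfies (\ref{nonpp}), i.e. $\int_{\CC^n}NP(dd^cu)^n=1$, translates under the correspondence into $\int_X MA(\phi)=1$, so that $\phi\in\mathcal E(X,\omega)$; this uses the identification of the nonpluripolar products $(dd^cu)^n$ and $MA(\phi)=(dd^c\phi+\omega)^n$ already recorded in the discussion preceding Proposition \ref{gap}. Because $\phi\in\mathcal E(X,\omega)$ has $\int_X MA(\phi)=1$ with all of this mass already carried by $\CC^n$, the measure $MA(\phi)$ puts no mass on the hyperplane at infinity and coincides there with $(dd^cu)^n$.

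Next I would translate the domination hypothesis. On the affine chart $u_0$ is finite-valued, so the pointwise inequality $v\leq u$ is equivalent to $\psi\leq\phi$ on $\CC^n$; combined with the previous paragraph, the hypothesis that $v\leq u$ a.e.-$(dd^cu)^n$ becomes exactly $\psi\leq\phi$ a.e.-$(dd^c\phi+\omega)^n$. Proposition \ref{domprin} then yields $\psi\leq\phi$ everywhere on $X$, and restricting to $\CC^n$ and adding $u_0$ to both sides gives $v\leq u$ on $\CC^n$, as desired.

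The only point requiring care---and the one I expect to be the main obstacle---is the clean matching of the two nonpluripolar Monge-Amp\`ere measures under the correspondence, since the canonical truncations $\max[u,-j]$ on $\CC^n$ and $\max[\phi,-j]+u_0$ on $X$ differ; one must know that their nonpluripolar limits agree and that no mass escapes to the hyperplane at infinity. This identification is precisely what is built into the framework of this section, so the corollary ultimately reduces to a dictionary translation of Proposition \ref{domprin}.
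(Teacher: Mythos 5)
Your proposal is exactly the paper's argument: the paper derives Corollary \ref{domcorr} in one line by writing $u=\phi+u_0$, $v=\psi+u_0$ with $\phi,\psi\in PSH(X,\omega)$ and invoking Proposition \ref{domprin}, which is precisely the dictionary translation you carry out. Your extra care about matching the nonpluripolar Monge-Amp\`ere measures under the correspondence is a point the paper leaves implicit (it is addressed in the discussion preceding Proposition \ref{gap}), so your write-up is if anything slightly more complete.
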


\begin{remark} 
Corollary \ref{domcorr} generalizes not only the multivariate ($n>1$) domination principle of Bedford-Taylor (Lemma 6.5 in \cite{[BT]}), but also the univariate ($n=1$) version, Theorem 3.2 in Chapter II of  \cite{ST}.
\end{remark}

\section{Final comments.}\label{sec:finalcom}

We end with some comments. In the univariate setting, it was proved in \cite{bloomvoic} that, using our notation, 
$$-\log W(\mu)=\frac{1}{2}I(\mu):=\frac{1}{2}\int_H \int_H \log \frac{1}{|z-t|} d\mu(t)d\mu(z),$$
the logarithmic energy of $\mu$, and, more generally,
$$-\log W^Q(\mu)=\frac{1}{2}I_Q(\mu):=\frac{1}{2}\int_H \int_H \log \frac{1}{|z-t|w(z)w(t)} d\mu(t)d\mu(z)$$
$$= \frac{1}{2}I(\mu)+\int_HQd\mu,$$
the weighted logarithmic energy of $\mu$. The factor $1/2$ comes from our use of the normalized weighted transfinite diameters $\bar \delta^w(H)= \delta^w(H)^{1/2}$. The equilibrium measures $\mu_{eq}(H)$  and $\mu_{eq}(H,Q)$ minimize $I(\mu)$ and $I_Q(\mu)$ over all $\mu\in {\mathcal M}(H)$. In the higher dimensional case, as we have noted, our functional $-\log W(\mu)$ is related to the electrostatic energy $E^*(\mu)$ defined by Berman, Boucksom, Guedj and Zeriahi in their seminal paper \cite{[BBGZ]}. In that paper, a special case of their Theorem 5.3 corresponding to the setting described in our Section \ref{sec:app} shows that $\mu_{eq}(H,Q)$ mimimizes the functional 
\begin{equation} \label{wtden}E^*(\mu) +\int_H (Q-u_0)d\mu = -\log W^Q(\mu) +E(V_T)\end{equation}
(see (\ref{bbgzfunc}) and (\ref{minwtd})) over all $\mu\in {\mathcal M}(H)$, generalizing the univariate result. Thus from Corollaries \ref{aftergap} and \ref{finalcor} one can interpret $$-\log W(\mu) \ \hbox{and}   \ -\log W^Q(\mu)=-\log W(\mu)+\int_H Qd\mu$$ 
as energies (weighted energies) to be minimized in the pluripotential theoretic setting; i.e., {\it pluripotential energies}.

Our definition of $W(\mu)$ essentially involves only the support of $\mu$ and shows that for a ``good'' sequence of discrete approximations $\mu_s:=\frac{1}{s}\sum_{j=1}^s \delta (a_j)\to \mu$ weak-*, we have
$$|VDM_d({\bf a})|^{1/ds} \to W(\mu).$$
In particular, we have $W(\mu)=0$ if and only if for {\bf any} array of points $\{A_n\}$ in $H$ where 
$A_n = \{a_n^1,...,a_n^{s_n}\}$ is a set of $s_n=s_n(d_n)$ points in $H^{s_n}$ with $d_n\uparrow \infty$ such that $\mu_n:= \frac{1}{s_n} \sum_{j=1}^{s_n} \delta(a_n^j)\to \mu$ weak-* we have
$$|VDM_{d_n}(A_n)|^{1/d_ns_n}\to 0.$$
Thus although our definitions involve an underlying compact set $H$, given a probability measure $\mu$ with compact support, information about $\mu$ near its support is sufficient to detect positivity of its pluripotential energy. Indeed, for $\mu$ a probability measure with compact support, Corollary \ref{aftergap} shows that for {\it any} convex, compact nonpluripolar set $H$ containing supp$(\mu)$ (or even any fat, connected, subanalytic set $H$; cf., Remark \ref{independ}), the functionals $J(\mu)$ and $W(\mu)$ defined relative to $H$ agree and coincide with $\exp(-E^*(\mu))$ up to the term $\int_Hu_0d\mu+E(V_T)$. Since $\int_Hu_0d\mu$ is the same for any such $H$, our definitions are independent of which set $H$ we choose.

Finally, a comment on the density condition: we noted that this hypothesis on $\nu$ implies that $(H,\nu)$ satisfies a Bernstein-Markov property for holomorphic polynomials on $\CC^{n}$; indeed, $(H,\nu)$ satisfies a Bernstein-Markov property for {\it real} polynomials on $\RR^{2n}$. Following the proof of Proposition 3.1 in \cite{bloomvoic}, for any positive, continuous weight $w$ on $H$, the triple $(H,\nu,w)$ satisfies a {\it weighted Bernstein-Markov property} for holomorphic polynomials on $\CC^{n}$: for all $p_d\in \mathcal P_d$, 
\begin{equation}\label{wtdbernmark}||w^dp_d||_H \leq M_d \bigl(\int_H |p_d(z)|^2  |w(z)|^{2d} d\nu(z) \bigr)^{1/2}\ \hbox{with} \ \limsup_{d\to \infty} M_d^{1/d} =1.\end{equation}
In \cite{[Ber]}, such measures $\nu$ were called {\it strongly Bernstein-Markov} on $H$.

\bigskip

\end{document}